\def\edge{\ar@{-}}
\def\dedge{\ar@{.}}
\long\def\ignore#1{#1}
\renewcommand{\k}{\Bbbk}
\newcommand{\N}{{\mathbb N}}
\newcommand{\mn}{{\mathbb N}}
\newcommand{\mz}{{\mathbb Z}}
\newcommand{\st}{{\rm st}}
\newcommand{\ca}{{\mathcal A}}
\newcommand{\ch}{{\mathcal H}}
\newcommand{\alambda}{\ca_\lambda}
\newcommand{\mr}{{\mathbb R}}
\newcommand{\ley}{{\rm Le}}
\newcommand{\gqmn}{{\mathcal G}_q(m,n)}
\newcommand{\goesto}{\longrightarrow} 
\def\co{{\cal O}}
\def\coq{\co_q}
\def\oqmmn{\coq(M_{m,n}(\k))}
\def\oqgmn{\coq(G_{m,n}(\k))}
\def\dhom{{\rm Dhom}}
\def\sgamma{S(\gamma)}
\def\sogamma{S^o(\gamma)}
\def\sigmac{\sigma_c}
\newcommand{\wbar}{\overline} 
\def\wr{\widehat{R}}
\def\mcgamma{{\mathcal M}_\gamma}
\def\goesto{\longrightarrow} 
\def\sse{\subseteq}
\def\ssneq{\subsetneqq} 
\def\spec{{\rm Spec}}
\def\hspec{\ch\mbox{-}\spec}
\newcommand{\fract}{\mathrm{Frac}}
\newcommand{\tq}{\,|\,}
\newcommand{\qed}{\hfill \rule{1.5mm}{1.5mm}}
\newtheorem{theorem}{Theorem}[section]
\newtheorem{proposition}[theorem]{Proposition}
\newtheorem{definition}[theorem]{Definition}
\newtheorem{lemma}[theorem]{Lemma}
\newtheorem{corollary}[theorem]{Corollary}
\newtheorem{remark}[theorem]{Remark}
\newtheorem{example}[theorem]{Example}
\newcommand{\titre}{Prime ideals in the quantum grassmannian}
\begin{document}

\title{{\vspace{-1.5cm} \bf \titre}}
\author{S Launois, T H Lenagan and L Rigal\footnote{
This research was supported by a Marie Curie Intra-European
  Fellowship within the $6^{\mbox{th}}$ European Community Framework
  Programme and by Leverhulme Research Interchange
Grant F/00158/X}}
\date{}
\maketitle

\begin{abstract} 
We consider quantum Schubert cells in the quantum grassmannian and give a cell
decomposition of the prime spectrum via the Schubert cells. As a consequence,
we show that all primes are completely prime in the generic case where the
deformation parameter $q$ is not a root of unity. There is a natural torus
action of $\ch = (\k^*)^n$ on $\gqmn$ and the cell decomposition of the set of
$\ch$-primes leads to a parameterisation of the $\ch$-spectrum via certain
diagrams on partitions associated to the Schubert cells. Interestingly, the
same parameterisation occurs for the non-negative cells in recent studies
concerning the totally non-negative grassmannian. Finally, we use the cell
decomposition to establish that the quantum grassmannian satisfies normal
separation and catenarity.
\end{abstract}

\vskip .5cm
\noindent
{\em 2000 Mathematics subject classification:} 16W35, 16P40, 16S38, 17B37,
20G42, 05Exx, 05Axx.

\vskip .5cm
\noindent
{\em Key words:} Quantum matrices, quantum grassmannian, quantum Schubert 
variety, quantum Schubert cell, prime spectrum, total positivity. 


\section*{Introduction}

Let $m \le n$ be positive integers and let $\oqmmn$ denote the quantum
deformation of the affine coordinate ring on $m \times n$ matrices, with
nonzero deformation parameter $q$ in the base field. The quantum deformation
of the homogeneous coordinate ring of the grassmannian, denoted $\oqgmn$, is
defined as the subalgebra of $\oqmmn$ generated by the maximal quantum minors
of the generic matrix of $\oqmmn$. To simplify, these algebras will be
referred to in the sequel as the algebra of quantum matrices and the quantum
grassmannian, respectively. 

The main goal of this work is the study of the prime spectrum of the quantum
grassmannian. This algebra is naturally endowed with the action of a torus
$\ch$. Thus, according to the philosophy of the {\em stratification theory} as
developed by Goodearl and Letzter (see \cite{bg}), our main concern is the set
of $\ch$-prime ideals (namely, the prime ideals invariant under the action of
$\ch$). Recall that if $A$ is an algebra and $\ch$ a torus which acts on $A$
by algebra automorphisms then the stratification theory suggests a study of
the prime spectrum of $A$ by means of a partition into strata, each stratum
being indexed by an $\ch$-prime ideal. For many algebras arising from the
theory of quantum groups, general results have been proved about such a
stratification. For example, when such an algebra is a certain kind of
iterated skew polynomial extension, general results show that it has only
finitely many $\ch$-primes and that each stratum is homeomorphic to the
spectrum of a suitable commutative Laurent polynomial ring. However, the
algebra which interests us here is far from being such an extension and it is
not even clear at the outset that it has finitely many $\ch$-primes. For this
reason, these general results do not apply and we are led to use a very
different approach which has a geometric flavour. Recall that a classical
approach to the study of the grassmanian variety $G_{m,n}(\k)$ is to use its
partition into Schubert cells and their closures which are the so-called
Schubert subvarieties of the grassmannian. Notice that, in this decomposition,
Schubert cells are indexed by Young diagrams fitting in a rectangular $m
\times (n-m)$ Young diagram. Our method is inspired by this classical
geometric setting. 

Quantum analogues of Schubert varieties (or rather of their coordinate rings)
were studied in \cite{lr2} in order to show that the quantum grassmannian has
a certain combinatorial structure, namely the stucture of a {\em quantum
graded algebra with a straightening law}. Subsequently, some of their
properties have been established in \cite{lr3}. In this paper, we define
quantum Schubert cells as noncommutative dehomogenisations of quantum Schubert
varieties. Using the structure of a quantum graded algebra with a
straightening law enjoyed by the quantum grassmannian, we are then in position
to define a partition of its prime spectrum. This partition is called a {\em
cell decomposition} since it turns out that the set of $\ch$-primes of a given
component is in natural one-to-one correspondence with the set of $\ch$-primes
of an associated quantum Schubert cell. Hence, the description of the
$\ch$-primes of the quantum grassmannian reduces to that of the $\ch$-primes
of each of its associated quantum Schubert cells. (Here, the actions of $\ch$
on the quantum Schubert varieties and cells are naturally induced by its
action on the quantum grassmannian.)

On the other hand, we can show that a quantum Schubert cell can be identified
as a subalgebra of a quantum matrix algebra, with the variables that are
included sitting naturally in the Young diagram associated to that cell. As a
consequence, we can establish properties for quantum Schubert cells akin to
known properties of quantum matrix algebras. For example, we are able to
parameterise the $\ch$-prime ideals of a quantum Schubert cell by {\em Cauchon
diagrams} on the corresponding Young diagram, in the same way that Cauchon was
able to parameterise the $\ch$-prime ideals in quantum matrices, see
\cite{c2}. This is achieved by using the theory of {\em deleting derivations}
as developed by Cauchon in \cite{c1}. This theory utilizes certain changes of
variables in the field of fractions of the algebra under consideration. In the
case of quantum matrices, these changes of variable can be reinterpreted using
quasi-determinants, see \cite{gelret}. Recently, Cauchon diagrams in Young
diagrams have appeared in the literature under the name Le-diagrams see, for
example, \cite{post} and \cite{w}.

By using this approach, we are able to show that there are only finitely many
$\ch$-prime ideals in $\oqgmn$. More precisely, we show that such $\ch$-primes
are in natural one-to-one correspondence with Cauchon diagrams defined on
Young diagrams fitting into a rectangular $m \times (n-m)$ Young diagram.
Following on from this description, we are able to calculate the number of
$\ch$-prime ideals in the quantum grassmannian. 

In addition, we are able to show that prime ideals in the quantum grassmannian
are completely prime, and that this algebra satisfies normal separation and,
hence, is catenary. Again, the method is to establish these properties for
each quantum Schubert cell and then transfer them to the quantum grassmannian.

To conclude this introduction, it should be stressed that there are very
interesting connections between our results in the present paper and recent
results in the theory of total positivity. More details on this are given in
Section 5.

\section{Basic definitions}\label{sec-basic-definitions}

Throughout the paper, $\k$ is a field and $q$ is a nonzero element of $\k$
that is not a root of unity. Occasionally, we will remind the reader of this
restriction in the statement of results. 

In this section, we collect some basic definitions and properties about the
objects we intend to study. Most proofs will be omitted since these results
already appear in \cite{klr, lr2, lr3}. Appropriate references will be given
in the text.\\

Let $m,n$ be positive integers. \\

The quantisation of the coordinate ring of the affine variety $M_{m,n}(\k)$ of
$m \times n$ matrices with entries in $\k$ is denoted ${\mathcal
O}_q(M_{m,n}(\k))$. It is the $\k$-algebra generated by $mn$ indeterminates
$x_{ij}$, with $1 \le i \le m$ and $1 \le j \le n$, subject to the relations:
\[
\begin{array}{ll}  
x_{ij}x_{il}=qx_{il}x_{ij},&\mbox{ for }1\le i \le m,\mbox{ and }1\le j<l\le
n\: ;\\ x_{ij}x_{kj}=qx_{kj}x_{ij}, & \mbox{ for }1\le i<k \le m, \mbox{ and }
1\le j \le n \: ; \\ x_{ij}x_{kl}=x_{kl}x_{ij}, & \mbox{ for } 1\le k<i \le m,
\mbox{ and } 1\le j<l \le n \: ; \\
x_{ij}x_{kl}-x_{kl}x_{ij}=(q-q^{-1})x_{il}x_{kj}, & \mbox{ for } 1\le i<k \le
m, \mbox{ and } 1\le j<l \le n.
\end{array}
\]
To simplify, we write $M_{n}(\k)$ for $M_{n,n}(\k)$ and 
${\mathcal O}_q(M_{n}(\k))$ for ${\mathcal O}_q(M_{n,n}(\k))$.
The $m \times n$ matrix
${\bf X}=(x_{ij})$ is called the generic matrix associated with
${\mathcal O}_q(M_{m,n}(\k))$.
\\

As is well known, there exists a $\k$-algebra {\em transpose isomorphism}
between ${\mathcal O}_q(M_{m,n}(\k))$ and ${\mathcal O}_q(M_{n,m}(\k))$, see
\cite[Remark 3.1.3]{lr2}. Hence, from now on, we assume that $m \le n$,
without loss of generality.\\

An index pair is a pair $(I,J)$ such
that $I \subseteq \{1,\dots,m\}$ and $J \subseteq \{1,\dots,n\}$ are subsets
with the same cardinality. Hence, an index pair is given by an integer $t$
such that $1 \le t \le m$ and ordered sets 
$I=\{i_1 < \dots < i_t\} \subseteq \{1,\dots,m\}$
and $J=\{j_1 < \dots < j_t\} \subseteq \{1,\dots,n\}$. To any such index pair
we associate the quantum minor 
\[ 
[I|J] = \sum_{\sigma\in {\mathfrak S}_t}
(-q)^{\ell(\sigma)} x_{i_{\sigma(1)}j_1} \dots x_{i_{\sigma(t)}j_t} . 
\]

\begin{definition} -- \label{def-q-grassmannian} 
The {\it quantisation of the coordinate ring of the grassmannian of
$m$-dimensional subspaces of $\k^n$}, denoted by ${\mathcal O}_q(G_{m,n}(\k))$
and informally referred to as the ($m\times n$) quantum grassmannian is the
subalgebra of ${\mathcal O}_q(M_{m,n}(\k))$ generated by the $m \times m$
quantum minors.
\end{definition} 

An index set  is a subset $I=\{i_1 < \dots < i_m\}
\subseteq \{1,\dots,n\}$. To any index set we associate the maximal quantum
minor $[I]:=[\{1,\dots,m\}|I]$ of ${\mathcal O}_q(M_{m,n}(\k))$ 
which is, thus, an
element of ${\mathcal O}_q(G_{m,n}(\k))$. The set of all index sets is denoted
by $\Pi_{m,n}$. Since $\Pi_{m,n}$ is in one-to-one correspondence with the set
of all maximal quantum minors of ${\mathcal O}_q(M_{m,n}(\k))$, we will often
identify these two sets. We equip  $\Pi_{m,n}$ with a partial order 
$\le_\st$ defined in the following way. Let $I=\{i_1< \dots
<i_m\}$ and $J=\{j_1< \dots <j_m\}$ be  two index sets, then 
\[ 
I\le_\st J
\Longleftrightarrow i_s \le j_s \quad\mbox{for}\quad 1 \le s \le m. 
\]


For example, Figure~\ref{gra} shows the partial ordering on generators of
${\mathcal O}_q(G_{3,6}(\k))$.\\ 


Let $A$ be a noetherian $\k$-algebra, and assume that the torus
$\ch:=(\k^*)^r$ acts rationally on $A$ by $\k$-algebra 
automorphisms. (For details
concerning rational actions of tori, see \cite[Chapter II.2]{bg}.) A two-sided
ideal $I$ of $A$ is said $\ch$-invariant if $h\cdot I=I$ for all $h \in \ch$.
An $\ch$-prime ideal of $A$ is a proper $\ch$-invariant ideal $J$ of $A$ such
that whenever $J$ contains the product of two $\ch$-invariant ideals of $A$
then $J$ contains at least one of them. We denote by $\ch$-$\spec(A)$ the
$\ch$-spectrum of $A$; that is, the set of all $\ch$-prime ideals of $A$. It
follows from \cite[Proposition II.2.9]{bg} that every $\ch$-prime ideal is
prime when $q$ is not a root of unity; so that in this case $\ch$-$\spec(A)$
coincides with the set of all $\ch$-invariant prime ideals of $A$.

There are natural torus actions on the classes of algebras that we study here,
including quantum matrices, partition subalgebras of quantum matrices and
quantum grassmannians. These actions are rational; and so the remarks above
apply.


First, there is an action of a torus $\ch := (\k^*)^{m+n}$ on $\oqmmn$ given
by 
\[
(\alpha_1, \dots, \alpha_m, \beta_1, \dots, \beta_n)\circ x_{ij} :=
\alpha_i\beta_jx_{ij}.
\] 
In other words, one is able to multiply through rows
and columns by nonzero scalars.


Next, there is an action of the torus $\ch:= (\k^*)^n$ on ${\mathcal
O}_q(G_{m,n}(\k))$ which comes from the column action on quantum
matrices. Thus, $(\alpha_1,\dots,\alpha_n)\circ[i_1,\dots,i_m] :=
\alpha_{i_1}\dots\alpha_{i_m}[i_1,\dots,i_m]$. We shall be interested in prime
ideals left invariant under the action of this torus. The set of such prime
ideals is the $\ch$-spectrum of ${\mathcal O}_q(G_{m,n}(\k))$.
\\

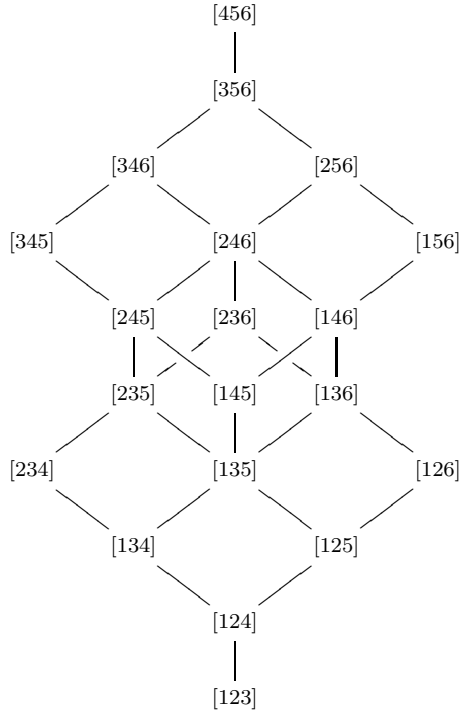
\begin{figure}[ht]
\ignore{
$$\xymatrixrowsep{2.4pc}\xymatrixcolsep{3.2pc}\def\objectstyle{\scriptstyle}
\xymatrix@!0{
 && [ 456 ] \edge[d]\\
 && [ 356] \edge[dl] \edge[dr]\\
 & [ 346] \edge[dl] \edge[dr] &&[ 256] \edge[dl] \edge[dr]\\
[ 345] \edge[dr] && [ 246] \edge[dl] \edge[dr] \edge[d] && [ 156] 
  \edge[dl]\\
 & [ 245] \edge[d] \edge[dr] & [ 236] \edge[dl]|\hole \edge[dr]|\hole &[146]
\edge[dl] \edge[d]\\
 &[235] \edge[dl] \edge[dr] &[145] \edge[d] &[136] \edge[dl] \edge[dr]\\
[234] \edge[dr] &&[135] \edge[dl] \edge[dr] &&[126]\edge[dl]\\
 &[134] \edge[dr] &&[125] \edge[dl]\\
 &&[124] \edge[d]\\
 &&[123]
}$$
}
\caption{The partial ordering $\le_\st$ on 
${\mathcal O}_q(G_{3,6}(\k))$.}\label{gra}
\end{figure}

We recall the definition of quantum Schubert varieties given in 
\cite{lr3}. 

\begin{definition} -- \label{def-q-schubert}
Let $\gamma\in\Pi_{m,n}$ and put $\Pi_{m,n}^\gamma
=\{\alpha\in\Pi_{m,n} \tq \alpha\not\ge_\st\gamma\}$. 
The quantum Schubert variety $\sgamma$ associated to
$\gamma$ is 
\[ 
\sgamma:= {\mathcal
O}_q(G_{m,n}(\k))/\langle \Pi_{m,n}^\gamma \rangle . 
\] 
(Note that $\sgamma$ was denoted by ${\mathcal O}_q(G_{m,n}(\k))_\gamma$ in
\cite{lr3}.)

\end{definition}

This definition is inspired by the classical description of the coordinate
rings of Schubert varieties in the grassmannian. For more details about this
matter, see \cite[Section 6.3.4]{glak}.\\

Note that each of the maximal quantum minors that generate $\oqgmn$ is an
$\ch$-eigenvector. Thus, the $\ch$-action on $\oqgmn$ transfers to 
the quantum Schubert varieties $\sgamma$. \\

In order to study properties of the quantum grassmannian, the notion of a
quantum graded algebra with a straightening law (on a partially ordered set
$\Pi$) was introduced in \cite{lr2}. We now recall the definition of these 
algebras and mention various properties that we will
use later.\\

Let $A$ be an algebra and $\Pi$ a finite subset of 
elements of $A$ with a partial order $<_\st$. A  
{\em standard monomial} on $\Pi$ is an element
of $A$ which is either $1$ or of the form $\alpha_1\dots\alpha_s$, 
for some $s\geq 1$, where $\alpha_1,\dots,\alpha_s \in \Pi$ and
$\alpha_1\le_\st\dots\le_\st\alpha_s$. 

\begin{definition} \label{recall-q-gr-asl} -- 
Let $A$ be an ${\mathbb N}$-graded $\k$-algebra and $\Pi$ a finite subset 
of $A$ equipped with a partial order $<_\st$. 
We say that $A$ is a {\em quantum graded algebra with a straightening law} 
({\em quantum graded A.S.L.} for short) on the poset $(\Pi,<_\st)$ 
if the following conditions are satisfied.\\
(1) The elements of $\Pi$ are homogeneous with positive degree.\\
(2) The elements of $\Pi$ generate $A$ as a $\k$-algebra.\\
(3) The set of standard monomials on $\Pi$ is a linearly independent set.\\
(4) If $\alpha,\beta\in\Pi$ are not comparable for $<_\st$, 
then $\alpha\beta$ 
is a linear combination of terms $\lambda$ or $\lambda\mu$, where 
$\lambda,\mu\in\Pi$, $\lambda\le_\st\mu$ and $\lambda<_\st\alpha,\beta$.\\
(5) For all $\alpha,\beta\in\Pi$, there exists $c_{\alpha\beta} \in \k^\ast$ 
such that $\alpha\beta-c_{\alpha\beta}\beta\alpha$ is a linear combination of 
terms $\lambda$ or $\lambda\mu$, where $\lambda,\mu\in\Pi$,
$\lambda\le_\st\mu$ and $\lambda<_\st\alpha,\beta$.
\end{definition}

By \cite[Proposition 1.1.4]{lr2}, if $A$ is a quantum graded A.S.L. on the
partially 
ordered set $(\Pi,<_\st)$, then the set of standard monomials on $\Pi$ forms a
$\k$-basis of $A$. Hence, in the presence of a standard monomial basis, the
structure of a quantum graded A.S.L. may be seen as providing more detailed 
information on the way standard monomials multiply and commute.

\begin{example} -- \rm
It is shown, in \cite[Theorem 3.4.4]{lr2}, that ${\mathcal O}_q(G_{m,n}(\k))$
is a quantum graded algebra with a straightening law on $(\Pi_{m,n},\le_\st)$.
\end{example}

From our point of view, one important feature of quantum graded A.S.L. 
is the
following. Let $A$ be a $\k$-algebra which is a quantum graded A.S.L. on the
set $(\Pi,\le_\st)$. A subset $\Omega$ of $\Pi$ will be called a $\Pi$-ideal
if it is an ideal of the partially 
ordered set $(\Pi,\le_\st)$ in the sense of lattice
theory; that is, if it satisfies the following property: if $\alpha\in\Omega$
and if $\beta\in\Pi$, with $\beta \le_\st \alpha$, then $\beta\in\Omega$. We
can consider the quotient $A/\langle\Omega\rangle$ of $A$ by the ideal
generated by $\Omega$. Clearly, it is still a graded algebra and it is
generated by the images in $A/\langle\Omega\rangle$ of the elements of
$\Pi\setminus\Omega$. The important point here is that
$A/\langle\Omega\rangle$ inherits from $A$ 
a natural quantum graded A.S.L. structure on
$\Pi\setminus\Omega$ (or, more precisely, 
on the canonical image of $\Pi\setminus\Omega$
in $A/\langle\Omega\rangle$). In particular, the set of
homomorphic images in $A/\langle\Omega\rangle$ of the standard monomials of
$A$ which either equal $1$ or are of the form $\alpha_1 \dots \alpha_t$
($t\in\N^\ast$) and $\alpha_1\notin\Omega$ form a $\k$-basis for
$A/\langle\Omega\rangle$. The reader will find all the necessary details 
in \S
1.2 of \cite{lr2}.

\begin{example} -- \rm \label{example-q-gr-ASL}
Let $\gamma \in\Pi_{m,n}$. It is clear that the set $\Pi_{m,n}^\gamma$
introduced in Definition \ref{def-q-schubert} is a $\Pi_{m,n}$-ideal. Hence,
the discussion above shows that the quantum Schubert variety $S(\gamma)$ is a
quantum graded A.S.L. on the canonical image in $S(\gamma)$ of $\Pi_{m,n}
\setminus \Pi_{m,n}^\gamma$. In particular, the canonical images in
$S(\gamma)$ of the standard monomials of ${\mathcal O}_q(G_{m,n}(\k))$ which
either equal to $1$ or are of the form $[I_1]\dots[I_t]$, for some $t\geq 1$
and with $\gamma \le_\st [I_1]$, form a $\k$-basis of $S(\gamma)$.
\end{example}

\begin{remark} -- \rm \label{remark-single-min-elt}
Let $\gamma \in\Pi_{m,n}$. As mentioned in Example \ref{example-q-gr-ASL}, the
quantum Schubert variety $S(\gamma)$ is a quantum graded A.S.L. on the
canonical image in $S(\gamma)$ of $\Pi_{m,n} \setminus \Pi_{m,n}^\gamma$. At
this point, it is worth noting that the set $\Pi_{m,n} \setminus
\Pi_{m,n}^\gamma$ has a single minimal element, namely $\gamma$, and that the
image of $\gamma$ is a normal nonzerodivisor in $S(\gamma)$, by \cite[Lemma
1.2.1]{lr2}.
\end{remark}


\section{Partition subalgebras of quantum matrices}
\label{section-partitions}


Let $\lambda = (\lambda_1, \lambda_2, \dots, \lambda_m)$ be a partition with
$n\geq \lambda_1 \geq \lambda_2\geq \dots \geq \lambda_m \geq 0$. The
{\em partition subalgebra $\alambda$} 
of $\oqmmn$ is defined to be the subalgebra
of $\oqmmn$ generated by the variables $x_{ij}$ with $j\leq \lambda_i$. By
looking at the defining relations for quantum matrices, it is easy to see that
$\alambda$ can be presented as an iterated Ore extension with the variables
$x_{ij}$ added in lexicographic order. As a consequence, partition
subalgebras are noetherian domains.  
Recall that there is an action of a torus $\ch := (\k^*)^{m+n}$ on 
$\oqmmn$ given by
$(\alpha_1, \dots, \alpha_m, \beta_1, \dots, \beta_n)\circ x_{ij} :=
\alpha_i\beta_jx_{ij}.$ This action induces an action on $\alambda$, by
restriction. 
Our
main aim in this section is to observe that the Goodearl-Letzter
stratification theory and the Cauchon theory of deleting derivations apply to
partition subalgebras of quantum matrices. As a consequence, we can then
exploit these theories to obtain information about the prime and $\ch$-prime
spectra of partition subalgebras.

The conditions needed to use the theories have been brought together in the
notion of a (torsion-free) CGL-extension introduced in 
\cite[Definition 3.1]{llr}; the definition is given below, for convenience.

\begin{definition}\label{def-cgl}
{\rm  An iterated skew polynomial extension 
\[
A = \k[x_1][x_2; \sigma_2, \delta_2] \dots [x_n; \sigma_n,\delta_n]
\]
is said to be a {\em CGL extension} (after Cauchon, Goodearl and Letzter)  
provided that the following list of
conditions is satisfied:

\begin{itemize}

\item With $A_j:= \k[x_1][x_2; \sigma_2, \delta_2] \dots [x_j;
\sigma_j,\delta_j]$ for each $1\leq j\leq n$, each $\sigma_j$ is a
$\k$-algebra automorphism of $A_{j-1}$, 
each $\delta_{j}$ is a locally nilpotent
$\k$-linear $\sigma_{j}$-derivation of
$A_{j-1}$, and there exist nonroots of unity $q_j \in \k^*$ with
$\sigma_j\delta_j = q_j\delta_j\sigma_j$;

\item For each $i<j$ there exists a $\lambda_{ji}\in\k^*$ such that
$\sigma_j(x_i) = \lambda_{ji}x_i$;

\item There is a torus $\ch = (\k^*)^r$ acting rationally on $A$ by 
$\k$-algebra
automorphisms;

\item The $x_i$ for $1\leq i\leq n$ are $\ch$-eigenvectors;

\item There exist elements $h_1, \dots, h_n \in \ch$ such that $h_j(x_i) =
\sigma_j(x_i)$ for $j>i$ and such that the $h_j$-eigenvalue of $x_j$ is not a
root of unity. 

\end{itemize}

If, in addition, the subgroup of $\k^*$ generated by the $\lambda_{ji}$ is
torsionfree then we will say that $A$ is a {\em torsionfree CGL extension}.}
\end{definition}

For a discussion of rational actions of tori, see \cite[Chapter II.2]{bg}.
\\

It
is easy to check that all of these conditions are satisfied for partition
subalgebras (for exactly the same reasons that quantum matrices are
CGL-extensions).

\begin{proposition} \label{prop-partition-cgl}
Partition subalgebras of quantum matrix algebras 
are CGL-extensions and are torsion-free CGL
extensions when the parameter $q$ is not a root of unity.
\end{proposition}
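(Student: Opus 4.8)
The plan is to verify the CGL-extension conditions for a partition subalgebra $\alambda$ directly from its iterated Ore extension presentation. First I would fix the lexicographic order on the generators $x_{ij}$ (with $j \le \lambda_i$) and observe, as already noted in the text, that $\alambda$ is an iterated skew polynomial extension with the variables added in this order; this furnishes the underlying structure $A = \k[x_1][x_2;\sigma_2,\delta_2]\dots[x_N;\sigma_N,\delta_N]$ upon relabelling the $x_{ij}$ as $x_1,\dots,x_N$. The defining relations of $\oqmmn$ restrict to $\alambda$, and since the set of variables $\{x_{ij} : j \le \lambda_i\}$ is closed under the relations (each relation among chosen variables involves only chosen variables, because the staircase shape of $\lambda$ guarantees that the correction term $x_{il}x_{kj}$ appearing in the last relation stays inside the partition shape), the $\sigma$'s and $\delta$'s restrict properly. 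This last closure point is the one place where the partition hypothesis $\lambda_1 \ge \dots \ge \lambda_m$ is genuinely used.

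Next I would check each bullet of Definition~\ref{def-cgl} in turn. For the automorphism and derivation conditions, the commutation relations of quantum matrices show that each $\sigma_j$ acts on an earlier generator $x_i$ by multiplication by a power of $q$ (so $\sigma_j(x_i) = \lambda_{ji} x_i$ with $\lambda_{ji}$ a power of $q$), each $\delta_j$ is given by the quadratic correction term and is locally nilpotent since it strictly lowers a suitable degree/lexicographic filtration, and the relation $\sigma_j \delta_j = q^2 \delta_j \sigma_j$ holds with $q_j = q^2$, a nonroot of unity because $q$ is not a root of unity. The torus $\ch = (\k^*)^{m+n}$ from the row-column action restricts to $\alambda$ and acts rationally by $\k$-algebra automorphisms, with each $x_{ij}$ an $\ch$-eigenvector; this gives the third and fourth bullets. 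For the fifth bullet I would exhibit elements $h_j \in \ch$ implementing $\sigma_j$ on the earlier variables: since $\sigma_j(x_i)$ multiplies $x_i$ by $\alpha_{i'}\beta_{j'}$-type scalars determined by the row/column indices, one chooses $h_j$ to scale exactly the row and column of the $j$-th variable appropriately, and then checks that the resulting eigenvalue of $x_j$ itself is a power of $q$ other than $1$, hence not a root of unity.

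The torsion-free refinement is immediate once the $\lambda_{ji}$ are identified: they all lie in the cyclic subgroup $\langle q \rangle$ of $\k^*$, and when $q$ is not a root of unity this subgroup is infinite cyclic, hence torsion-free. I expect the main obstacle to be not any single computation but the bookkeeping of the fifth bullet together with the closure argument of the first paragraph: one must confirm that for \emph{every} variable in the chosen staircase set, the required implementing torus element $h_j$ exists and simultaneously witnesses a nonroot-of-unity eigenvalue, and that no defining relation forces a variable outside the set $\{x_{ij} : j \le \lambda_i\}$. Since these are exactly the verifications that make $\oqmmn$ itself a torsion-free CGL extension, and the partition shape only restricts to a subset of variables closed under the relations, the cleanest route is to observe that all the required data for $\alambda$ are simply the restrictions of the corresponding data for $\oqmmn$, so the conditions are inherited rather than re-proved from scratch.
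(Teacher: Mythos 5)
Your proposal is correct and takes essentially the same approach as the paper, whose proof consists precisely of the observation that adding the variables in lexicographic order yields an iterated Ore extension satisfying the CGL conditions ``for exactly the same reasons that quantum matrices are CGL-extensions''; your closure argument for the staircase shape and the explicit row-column torus elements $h_j$ are exactly the details the paper declines to write out. One harmless convention slip: with the paper's relation $\sigma_j\delta_j = q_j\delta_j\sigma_j$ and lexicographic order, the computation gives $q_j = q^{-2}$ rather than $q^{2}$, which of course is still not a root of unity.
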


\begin{proof}
It is only necessary to show that we can introduce the variables $x_{ij}$ that
define the partition subalgebra in such a way that the resulting
iterated skew polynomial extension satisfies the list of conditions above.
Lexicographic ordering is suitable. 
\qed\end{proof}

\begin{corollary}
Let $\alambda$ be a partition subalgebra of quantum matrices and suppose that
$\alambda$ is equipped with the induced action of $\ch$. Then $\alambda$ has
only finitely many $\ch$-prime ideals and all prime ideals of $\alambda$ are
completely prime when the parameter $q$ is not a root of unity.
\end{corollary}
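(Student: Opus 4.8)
The plan is to deduce this corollary directly from the previous proposition together with the general machinery of CGL-extensions, which encapsulates both the Goodearl--Letzter stratification theory and the Cauchon theory of deleting derivations.

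First, I would invoke Proposition~\ref{prop-partition-cgl} to record that each partition subalgebra $\alambda$, equipped with its induced $\ch$-action, is a CGL-extension, and moreover a torsion-free CGL-extension precisely when $q$ is not a root of unity. The point of the notion of a CGL-extension, as introduced in \cite{llr}, is that all the hypotheses required to apply the stratification theory of \cite{bg} and the deleting-derivations theory of \cite{c1} are built into the definition. So once Proposition~\ref{prop-partition-cgl} is in hand, the two desired conclusions follow by citing the appropriate general theorems for CGL-extensions rather than by any fresh computation.

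For the finiteness of the $\ch$-spectrum, the Goodearl--Letzter theory applies to any iterated skew polynomial extension of the CGL type: such an algebra is noetherian (which we already noted, since $\alambda$ is an iterated Ore extension over a field), the torus $\ch$ acts rationally, and the generators are $\ch$-eigenvectors, so \cite[Theorem II.5.12]{bg} (or the analogous statement quoted in \cite{llr}) yields that $\alambda$ has only finitely many $\ch$-prime ideals. For the complete primeness, I would appeal to the deleting-derivations/Cauchon theory: for a torsion-free CGL-extension, every prime ideal is completely prime. This is exactly the conclusion that the torsion-free hypothesis is designed to secure, and it is recorded in this generality in \cite{llr} (following Cauchon \cite{c1}). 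Since Proposition~\ref{prop-partition-cgl} guarantees the torsion-free condition when $q$ is not a root of unity, we conclude that all prime ideals of $\alambda$ are completely prime in that case.

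There is really no substantive obstacle here, since the hard technical work has been abstracted into the definition of a CGL-extension and verified in Proposition~\ref{prop-partition-cgl}; the only care needed is to match each conclusion to the correct hypothesis. In particular, I would be careful to note that finiteness of the $\ch$-spectrum holds for any CGL-extension and so does not require $q$ to avoid roots of unity, whereas complete primeness of \emph{all} primes is the feature that genuinely uses the torsion-free refinement, and hence the standing assumption that $q$ is not a root of unity. Thus the proof reduces to two citations applied to the structure established in the preceding proposition.
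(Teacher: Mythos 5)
Your proposal is correct and follows essentially the same route as the paper, which likewise deduces both conclusions immediately from Proposition~\ref{prop-partition-cgl} by citing \cite[Theorem II.5.12 and Theorem II.6.9]{bg}. The only cosmetic difference is that for complete primeness you cite the Cauchon/\cite{llr} formulation for torsion-free CGL-extensions, whereas the paper cites the equivalent statement \cite[Theorem II.6.9]{bg}; your remark correctly isolating which conclusion uses the torsion-free refinement matches the paper's hypotheses.
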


\begin{proof}
This follows immediately from the previous result and 
\cite[Theorem II.5.12 and Theorem II.6.9]{bg}.
\qed\end{proof}\\

In fact, we can be much more precise about the number of $\ch$-primes. We will
prove below that there exists a natural bijection between the $\ch$-prime
spectrum of $\alambda$ and Cauchon diagrams defined on the Young diagram
corresponding to the partition $\lambda$.

Suppose that $Y_\lambda$ is the Young diagram corresponding to the partition
$\lambda$. Then a {\em Cauchon diagram} on $Y_\lambda$ is an assignment of a
colour, either white or black, to each square of the diagram $Y_\lambda$ in
such a way that if a square is coloured black then either each square above is
coloured black, or each square to the left is coloured black. These diagrams
were first introduced by Cauchon, \cite{c2}, in his study of the $\ch$-prime
spectrum of quantum matrices. Recently, they have occurred with the name
$\ley$-diagrams in work of Postnikov, \cite{post}, and Williams, \cite{w}. 
\\

\begin{lemma} Let $\lambda = (\lambda_1, \lambda_2, \dots, \lambda_m)$ 
be a partition with
$n\geq \lambda_1 \geq \lambda_2\geq \dots \geq \lambda_m > 0$. The number of
$\ch$-prime ideals in $\alambda$ is equal to the number of Cauchon diagrams
defined on the Young diagram corresponding to the partition $\lambda$.
\end{lemma}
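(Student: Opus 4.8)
The plan is to exhibit a bijection between the $\ch$-prime spectrum of $\alambda$ and the set of Cauchon diagrams on $Y_\lambda$, thereby reducing the cardinality computation to a combinatorial one. By Proposition~\ref{prop-partition-cgl}, $\alambda$ is a torsion-free CGL extension under the stated hypothesis on $q$, so the Cauchon theory of deleting derivations applies. The central tool is the map, furnished by that theory, which sends each $\ch$-prime ideal of $\alambda$ to a subset of the variables $x_{ij}$ (equivalently, to a colouring of the squares of $Y_\lambda$, where a square is coloured black precisely when its corresponding variable is ``deleted''). First I would recall from Cauchon's work (see \cite{c1,c2}) that this assignment is a bijection from $\ch$-$\spec(\alambda)$ onto the set of admissible colourings, so the whole problem becomes the identification of which colourings arise.

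The heart of the argument is therefore to show that the admissible colourings are exactly the Cauchon diagrams on $Y_\lambda$, that is, the colourings in which a black square forces either its entire column above or its entire row to the left to be black. This is precisely the combinatorial condition that Cauchon established for the full quantum matrix algebra $\oqmmn$ in \cite{c2}. The plan is to argue that the deleting-derivations algorithm on $\alambda$, since the variables $x_{ij}$ with $j \le \lambda_i$ sit naturally inside the rectangular array and are introduced in the same lexicographic order as in $\oqmmn$, produces the same local combinatorial constraints on the colouring as in the quantum matrix case. Concretely, I would track how the change-of-variables step interacts with a given variable $x_{ij}$ and verify that the criterion for $x_{ij}$ to be deletable depends only on the squares above it in its column and to its left in its row, all of which lie inside $Y_\lambda$ because $\lambda$ is a partition (so the shape is closed under moving up or left). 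This closure property of Young diagrams is exactly what guarantees that the restriction of the quantum-matrix algorithm to the partition subalgebra is well defined and yields the same black/white forcing rule.

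The main obstacle I expect is precisely this compatibility: one must check that running the deleting-derivations procedure inside $\alambda$ does not differ from running it inside $\oqmmn$ and then intersecting, i.e.\ that no variable outside $Y_\lambda$ is ever needed to determine the fate of a variable inside $Y_\lambda$. Here the staircase shape of $\lambda$ is essential, and I would verify that at each stage of the algorithm the relevant correction terms $(q-q^{-1})x_{il}x_{kj}$ involve only indices that remain within the partition shape. Once this is settled, the bijection between $\ch$-primes and admissible colourings, combined with the identification of admissible colourings with Cauchon diagrams, gives the result immediately: the number of $\ch$-prime ideals of $\alambda$ equals the number of Cauchon diagrams on $Y_\lambda$. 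Finally, I would remark that the hypothesis $\lambda_m > 0$ (rather than merely $\lambda_m \ge 0$) ensures that $Y_\lambda$ has exactly $m$ nonempty rows, so that the diagram genuinely records all the generating variables and no degenerate empty rows distort the count.
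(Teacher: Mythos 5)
Your plan establishes at most half of what is needed. Cauchon's theory gives an injection $\varphi$ from $\hspec(\alambda)$ into the $\ch$-spectrum of the associated quantum affine space, whose $\ch$-primes are generated by subsets of the variables; your local analysis of the change-of-variables step (the forcing rule ``black implies all-black above or all-black to the left'', which indeed involves only squares inside $Y_\lambda$ because Young diagrams are closed under moving up and left) can at best show that the image of every $\ch$-prime satisfies the Cauchon condition, i.e.\ the containment $\hspec(\alambda)\subseteq\{J_C \mid C\in\mathcal{C}_\lambda\}$. What it cannot deliver is surjectivity: that \emph{every} Cauchon diagram on $Y_\lambda$ actually arises as $\varphi(J)$ for some $\ch$-prime $J$. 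Even in Cauchon's original treatment of the full matrix algebra this direction is a substantial separate argument, not a consequence of the local forcing rule, and nothing in your sketch adapts it to $\alambda$. Without it you obtain only the inequality $|\hspec(\alambda)|\le|\mathcal{C}_\lambda|$, not the stated equality. A further soft spot: the proposed comparison with ``running the algorithm inside $\oqmmn$ and then intersecting'' is not meaningful as stated, since $\alambda$ is a subalgebra of $\oqmmn$ and its $\ch$-primes are not contractions of $\ch$-primes of $\oqmmn$ in any controlled way; the algorithm must be, and can be, run intrinsically in $\alambda$, which is a CGL extension in its own right, so no such compatibility check is needed or helpful.

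The paper sidesteps the surjectivity problem entirely by proving the lemma as a pure counting statement. Setting $n_\lambda:=|\hspec(\alambda)|$, it splits $\hspec(\alambda)$ according to whether $x_{m,\lambda_m}$ lies in the prime: complete primeness forces a prime containing $x_{m,\lambda_m}$ to contain the whole last column or the whole last row of the diagram, and inclusion-exclusion on the quotients $\alambda/I_1\simeq\ca_{(\lambda_1-1,\dots,\lambda_m-1)}$, $\alambda/I_2\simeq\ca_{(\lambda_1,\dots,\lambda_{m-1})}$, $\alambda/I_3\simeq\ca_{(\lambda_1-1,\dots,\lambda_{m-1}-1)}$ counts that part; for the primes avoiding $x_{m,\lambda_m}$, the localisation isomorphism $\alambda[x_{m,\lambda_m}^{-1}]\simeq\ca_{(\lambda_1,\dots,\lambda_{m-1},\lambda_m-1)}[y^{\pm1};\sigma]$ from \cite[Th\'eor\`eme 3.2.1]{c1} together with \cite[Theorem 2.3]{llr} contributes $n_{(\lambda_1,\dots,\lambda_{m-1},\lambda_m-1)}$. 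The resulting recurrence is exactly the one Williams established for Le-diagrams in \cite[Remark 4.2]{w}, and the base case $n_{(1)}=2$ closes the induction. The explicit bijection you are after is then Theorem~\ref{theopartitionalgebra}, whose proof \emph{uses} this counting lemma precisely so that only the easy containment $\hspec(\alambda)\subseteq\{J_C\}$ (following Cauchon's Lemmes 3.1.6 and 3.1.7 in \cite{c2}) needs checking. Your logical order is thus inverted: to make your route work you would have to supply the hard surjectivity (``restoration'') step for partition subalgebras, or else fall back on a counting argument such as the paper's.
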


\begin{proof} Let $n_{\lambda}$ denote the number of $\ch$-prime ideals in
$\alambda$. First, we obtain a  recurrence relation for $n_{\lambda}$. 

The $\ch$-prime
spectrum of $\alambda$ can be written as a disjoint union:
$$\hspec(\alambda)=\{ J \in \hspec(\alambda) | x_{m, \lambda_m} \in J \}
\sqcup \{ J \in \hspec(\alambda) | x_{m, \lambda_m} \notin J \}.$$

It follows from the complete primeness of every $\ch$-prime ideal of
$\alambda$ that an $\ch$-prime ideal $J$ of $\alambda$ that contains $x_{m,
\lambda_m}$ must also contain either $x_{i, \lambda_m}$ for each $i \in \{1,
\dots, m\}$ or $x_{m, \alpha}$ for each $\alpha \in \{1, \dots , \lambda_m\}$.
Let $I_1$ be the ideal generated by $x_{i, \lambda_m}$ for $i \in \{1, \dots,
m\}$, and let $I_2$ be the ideal generated by $x_{m, \alpha}$ for $\alpha \in
\{1, \dots , \lambda_m\}$. Set $I_3:=I_1 +I_2$. As 
$$\frac{\alambda}{I_1} \simeq 
\ca_{(\lambda_1-1, \lambda_2-1, \dots , \lambda_m-1)}, \ \
\frac{\alambda}{I_2} \simeq 
\ca_{(\lambda_1, \lambda_2, \dots , \lambda_{m-1})} 
\mbox{~~and~~} \frac{\alambda}{I_3} 
\simeq \ca_{(\lambda_1-1, \lambda_2-1, \dots , \lambda_{m-1}-1)}, 
$$
we obtain 

\begin{eqnarray*}
n_{\lambda}&=& n_{(\lambda_1-1, \lambda_2-1, \dots , \lambda_m-1)} +
n_{(\lambda_1, \lambda_2, \dots , \lambda_{m-1})} -n_{(\lambda_1-1,
\lambda_2-1, \dots , \lambda_{m-1}-1)} \\&&
+ \; |\{ J \in \hspec(\alambda) | x_{m, \lambda_m} \notin J \}|.
\end{eqnarray*}
(Even though the above isomorphisms are not always $\ch$-equivariant, they
preserve the property of being an $\ch$-prime.) 

As $\alambda$ is a CGL extension, one can apply the theory of deleting
derivations to this algebra. In particular, it follows from \cite[Th\'eor\`eme
3.2.1]{c1} that the multiplicative system of $\alambda$ generated by
$x_{m,\lambda_m}$ is an Ore set in $\alambda$, and
$$
\alambda[x_{m,\lambda_m}^{-1}] \simeq \ca_{(\lambda_1, \lambda_2, \dots ,
\lambda_{m-1}, \lambda_m -1)} [y^{\pm 1};\sigma],
$$ 
where $\sigma$ is the
automorphism of $\ca_{(\lambda_1, \lambda_2, \dots , \lambda_{m-1}, \lambda_m
-1)}$ defined by $\sigma(x_{i\alpha})=q^{-1}x_{i\alpha}$ if $i=m$ or
$\alpha=\lambda_m$, and $\sigma(x_{i\alpha})=x_{i\alpha}$ otherwise. 
Denote this isomorphism by $\psi$, and note that 
$\psi(x_{m,\lambda_m}) = y$. As $x_{m,\lambda_m}$
is an $\ch$-eigenvector, the action of $\ch$ on $\alambda$ extends to an
action of $\ch$ on $\alambda[x_{m,\lambda_m}^{-1}] $, and so on
$\ca_{(\lambda_1, \lambda_2, \dots , \lambda_{m-1}, \lambda_m -1)} [y^{\pm 1};
\sigma]$. 
It is easy to show 
that this action restricts to an action on $\ca_{(\lambda_1, \lambda_2,
\dots , \lambda_{m-1}, \lambda_m -1)}$ which coincides with the ``natural''
action of $\ch$ on this algebra.
Hence the isomorphism
$\psi$ induces a bijection from $\{ J \in \hspec(\alambda) | x_{m, \lambda_m}
\notin J \}$ to $\hspec (\ca_{(\lambda_1, \lambda_2, \dots , \lambda_{m-1},
\lambda_m -1)} [y^{\pm 1}; \sigma])$; and so it follows from \cite[Theorem
2.3]{llr} that there exists a bijection between $\{ J \in \hspec(\alambda) |
x_{m, \lambda_m} \notin J \}$ and $\hspec (\ca_{(\lambda_1, \lambda_2, \dots ,
\lambda_{m-1}, \lambda_m -1)}) $. Hence 
$$
|\{ J \in \hspec(\alambda) | x_{m,
\lambda_m} \notin J \}|= n_{(\lambda_1, \lambda_2, \dots , \lambda_{m-1},
\lambda_m -1)};
$$ 
so that 
$$
n_{\lambda}= n_{(\lambda_1-1, \lambda_2-1, \dots ,
\lambda_m-1)} + n_{(\lambda_1, \lambda_2, \dots , \lambda_{m-1})}
-n_{(\lambda_1-1, \lambda_2-1, \dots , \lambda_{m-1}-1)} + n_{(\lambda_1,
\lambda_2, \dots , \lambda_{m-1}, \lambda_m -1)}. 
$$

On the other hand, it follows from \cite[Remark 4.2]{w} that the number of
Cauchon diagrams (equivalently, Le-diagrams) defined on the Young diagram
corresponding to the partition $\lambda$ satisfies the same recurrence. As the
number of $\ch$-prime ideals in $\ca_{(1)}$ is equal to $2$ which is also the
number of Cauchon diagrams defined on the Young diagram corresponding to the
partition $\lambda = (1)$, the proof is complete. \qed\end{proof}\\

Let $\lambda = (\lambda_1, \lambda_2, \dots, \lambda_m)$ be a partition with
$n\geq \lambda_1 \geq \lambda_2\geq \dots \geq \lambda_m > 0$ and let
$\alambda$ be the corresponding partition subalgebra of generic quantum
matrices. Let ${\mathcal C}_\lambda$ denote the set of Cauchon diagrams on the
Young diagram $Y_\lambda$ corresponding to the partition $\lambda$. We have
just seen that the sets $\hspec(\alambda)$ and ${\mathcal C}_\lambda$ have the
same cardinality. In fact, there is a natural bijection between these two sets
which carries over important algebraic and geometric information. This natural
bijection arises by using Cauchon's theory of deleting derivations developed
in \cite{c1} and \cite{c2}.

As $\alambda$ is a CGL extension, the theory of deleting derivations can be
applied to the iterated Ore extension $\alambda=k[x_{1,1}]\dots
[x_{m,\lambda_m};\sigma_{m,\lambda_m},\delta_{m,\lambda_m}]$ (where the
indices are increasing for the lexicographic order). Before describing the
deleting derivations algorithm, we introduce some notation. Denote by
$\leq_{{\rm lex}}$ the lexicographic ordering on $\mathbb{N}^2$ and set
$E:=\left( \bigsqcup_{i=1}^m \{i\} \times \{1, \dots , \lambda_i \} \cup
\{(m,\lambda_m+1)\} \right) \setminus \{(1,1)\}$. If $(j,\beta) \in E$ with
$(j,\beta) \neq (m,\lambda_m+1)$, then $(j,\beta)^{+}$ denotes the least
element (relative to $\leq_{{\rm lex}}$) of the set $\left\{ (i,\alpha) \in E
\mbox{ $\mid$}(j,\beta) < (i,\alpha) \right\}$. 

The deleting derivations algorithm constructs, for each $r \in E$, a 
family of elements $x_{i,\alpha}^{(r)}$ for $\alpha \leq \lambda_i$  
of $F:=\fract(\alambda)$, defined as
follows. \\$ $
\begin{enumerate}
\item \underline{If $r=(m,\lambda_m+1)$}, then
$x_{i,\alpha}^{(m,\lambda_m+1)}=x_{i,\alpha}$ for all $(i,\alpha)$ 
with $\alpha \leq \lambda_i$.\\$ $
\item \underline{Assume that $r=(j,\beta) < (m,\lambda_m+1)$}
and that the $x_{i,\alpha}^{(r^{+})}$ are already constructed.
Then, it follows from \cite[Th\'eor\`eme 3.2.1]{c1} that
$x_{j,\beta}^{(r^+)} \neq 0$ and,
for all $(i,\alpha)$, we have:
$$x_{i,\alpha}^{(r)}=\left\{ \begin{array}{ll}
x_{i,\alpha}^{(r^{+})}-x_{i,\beta}^{(r^{+})}
\left(x_{j,\beta}^{(r^{+})}\right)^{-1}
x_{j,\alpha}^{(r^{+})}
& \mbox{ if } i<j \mbox{ and } \alpha < \beta \\
x_{i,\alpha}^{(r^{+})} & \mbox{ otherwise.}
\end{array} \right.$$
\end{enumerate}

As in \cite{c1}, we denote by $\overline{\alambda}$ the subalgebra of
$\fract(\alambda)$ generated by the indeterminates obtained at the end of this
algorithm; that is, we denote by $\overline{\alambda}$ the subalgebra of
$\fract(\alambda)$ generated by the $t_{i,\alpha}:=x_{i,\alpha}^{(1,2)}$ for
each $(i,\alpha)$ such that $\alpha \leq \lambda_i$. Cauchon has shown that
$\overline{\alambda}$ can be viewed as the quantum affine space
$\overline{\alambda}$ generated by indeterminates $t_{ij}$ for $j \leq
\lambda_i$ with relations $t_{ij}t_{il} = qt_{il}t_{ij}$ for $j<l$, while 
$t_{ij}t_{kj} = qt_{kj}t_{ij}$ for $i<k$, and all other pairs commute. 
Observe that the torus $\ch$ still acts by
automorphisms on $\overline{\alambda}$ via $(a_1,\dots,a_m,b_1, \dots, b_n).
t_{ij}=a_ib_j t_{ij}$. The theory of deleting derivations allows the
explicit (but technical) construction of an embedding $\varphi$, called the
canonical embedding, from $\hspec(\alambda)$ into the $\ch$-prime spectrum of
$\overline{\alambda}$. The $\ch$-prime ideals of $\overline{\alambda}$ are
well-known: they are generated by the subsets of
$\{t_{ij}\}$. If $C$ is a Cauchon diagram defined on the Young tableau
corresponding to $\lambda$, then we denote by $K_C$ the (completely) prime
ideal of $\overline{\alambda}$ generated by the subset of indeterminates 
$t_{ij}$ such that the square in position $(i,j)$ is a black square of $C$.

\begin{theorem}
\label{theopartitionalgebra}
Let $\lambda = (\lambda_1, \lambda_2, \dots, \lambda_m)$ be a partition with
$n\geq \lambda_1 \geq \lambda_2\geq \dots \geq \lambda_m > 0$ and let
$\alambda$ be the corresponding partition subalgebra of generic quantum
matrices. Let $\mathcal{C}_{\lambda}$ denote the set of Cauchon diagrams
defined on the Young tableau corresponding to $\lambda$. \\ 
For every Cauchon
diagram $C \in \mathcal{C}_\lambda$, there exists a unique $\ch$-invariant
(completely) prime ideal $J_C$ of $\alambda$ such that $\varphi (J_C)=K_C$. 
Moreover there is no other $\ch$-prime in $\alambda$; so that 
$$
\hspec (\alambda)=\{J_C | C \in \mathcal{C}_{\lambda} \}.
$$
\end{theorem}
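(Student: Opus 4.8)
The plan is to use Cauchon's canonical embedding to transport the problem into the quantum affine space $\overline{\alambda}$, whose $\ch$-spectrum is completely understood, and then to match cardinalities using the preceding Lemma. Recall from the theory of deleting derivations (\cite{c1,c2}) that, since $\alambda$ is a CGL extension, there is an injective, inclusion-preserving map $\varphi$ from $\hspec(\alambda)$ into $\hspec(\overline{\alambda})$. Since $\overline{\alambda}$ is a quantum affine space on the indeterminates $t_{ij}$ (with $j \le \lambda_i$), its $\ch$-prime ideals are precisely the ideals $K_W$ generated by the subsets $W \subseteq \{t_{ij}\}$, and distinct subsets yield distinct primes. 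Thus every $\ch$-prime of $\alambda$ is carried by $\varphi$ to an ideal of the form $K_W$, and the whole problem reduces to identifying which subsets $W$ occur as images of elements of $\hspec(\alambda)$.

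The heart of the argument, and the step I expect to be the main obstacle, is to prove the inclusion $\varphi(\hspec(\alambda)) \subseteq \{K_C \mid C \in \mathcal{C}_\lambda\}$; that is, whenever $\varphi(J) = K_W$, the black/white colouring of $Y_\lambda$ determined by $W$ is a Cauchon diagram. This is exactly the combinatorial conclusion that Cauchon extracts from the deleting-derivations recursion for quantum matrices in \cite{c2}: at each step $r = (j,\beta)$ the locally nilpotent $\sigma$-derivation modifies only the variables $x_{i,\alpha}$ with $i<j$ and $\alpha<\beta$, and tracking how a deleted (black) variable propagates through the recursion forces precisely the condition that whenever a square is black, either every square above it or every square to its left is black. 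Since the skew-polynomial presentation of $\alambda$ and the deleting-derivations formulas recorded above are simply the restrictions, to the index set of $Y_\lambda$, of the corresponding data for $\oqmmn$, I would check that Cauchon's analysis applies verbatim in the Young-diagram shape; the only point needing care is that the set $E$ and the successor operation $(j,\beta)^{+}$ remain inside $Y_\lambda$, which they do by construction.

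Granting this inclusion, the conclusion is a counting argument. The map $\varphi$ is injective and lands inside the finite set $\{K_C \mid C \in \mathcal{C}_\lambda\}$, which has cardinality $|\mathcal{C}_\lambda|$ because distinct Cauchon diagrams have distinct black-square sets and hence distinct generating sets in $\overline{\alambda}$. By the preceding Lemma, $|\hspec(\alambda)| = |\mathcal{C}_\lambda|$. An injection between finite sets of equal cardinality is a bijection, so $\varphi$ restricts to a bijection from $\hspec(\alambda)$ onto $\{K_C \mid C \in \mathcal{C}_\lambda\}$. For each $C \in \mathcal{C}_\lambda$ I then set $J_C := \varphi^{-1}(K_C)$: injectivity of $\varphi$ gives uniqueness of $J_C$ subject to $\varphi(J_C) = K_C$, while surjectivity onto $\{K_C\}$ shows there are no further $\ch$-primes, yielding $\hspec(\alambda) = \{J_C \mid C \in \mathcal{C}_\lambda\}$. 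Finally, each $J_C$ is $\ch$-invariant because it lies in $\hspec(\alambda)$, and it is completely prime because, $q$ not being a root of unity, every prime of $\alambda$ is completely prime by the Corollary to Proposition \ref{prop-partition-cgl}.
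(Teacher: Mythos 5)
Your proposal is correct and follows essentially the same route as the paper: both reduce the theorem to the inclusion $\varphi(\hspec(\alambda)) \subseteq \{K_C \mid C \in \mathcal{C}_\lambda\}$, obtained by adapting Cauchon's arguments (Lemmes 3.1.6 and 3.1.7 of \cite{c2}) to the Young-diagram shape, and then conclude by the cardinality match with the preceding lemma, since an injection between finite sets of equal size is a bijection. Your write-up merely makes explicit the bookkeeping (injectivity of $\varphi$, the definition $J_C := \varphi^{-1}(K_C)$, and complete primeness via the CGL corollary) that the paper leaves to the reader.
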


\begin{proof} As the sets $\hspec (\alambda)$ and $\{ J_C | C \in
\mathcal{C}_{\lambda} \}$ have the same cardinality by the previous lemma, it
is sufficient to show that $\hspec (\alambda) \subseteq \{ J_C | C \in
\mathcal{C}_{\lambda} \}$. This inclusion can be obtained  by following the
arguments of \cite[Lemmes 3.1.6 and 3.1.7]{c2}. The details are left to the 
interested reader. \qed\end{proof}

\begin{remark}
{\rm Theorem \ref{theopartitionalgebra} provides more than just an explicit
bijection between the $\ch$-spectrum of $\alambda$ and
$\mathcal{C}_{\lambda}$. This natural bijection carries algebraic and
geometric data. For example, it can be shown that the height of $J_C$ is given
by the number of black boxes of the Cauchon diagram $C$. Also, the dimension
of the $\ch$-stratum (in the sense of \cite[Definition 2.2.1]{bg}) associated
to $J_C$ can be read off from the Cauchon diagram $C$.}
\end{remark}

An algebra $A$ is said to be {\em catenary} if for each pair of prime ideals
$Q\sse P$ of $A$ all saturated chains of prime ideals between $Q$ and $P$ have
the same length. Our next aim is to show that partition subalgebras of quantum
matrix algebras are catenary. The key property that we need to establish in
order to prove catenarity is the property of normal separation. Two prime
ideals $Q\ssneq P$ are said to be {\em normally separated} if there is an
element $c\in P\backslash Q$ such that $c$ is normal modulo $Q$. The algebra
is {\em normally separated} if each such pair of prime ideals is normally
separated. In our case, a result of Goodearl, see \cite[Section 5]{good},
shows that it is enough to concentrate on the $\ch$-prime ideals. Suppose that
$A$ is a $\k$-algebra with a torus $\ch$ acting rationally. If $Q$ is any
$\ch$-invariant ideal of $A$ then an element $c$ is said to be {\em
$\ch$-normal modulo $Q$} provided that there exists $h \in \ch$ such that $ca
-h(a)c\in Q$ for all $a\in A$. Goodearl observes that in this case one may
choose the element $c$ to be an $\ch$-eigenvector. 
The algebra $A$ has {\em $\ch$-normal
separation} provided that for each pair of $\ch$-prime ideals $Q\ssneq P$
there exists an element $c\in P\backslash Q$ such that $c$ is $\ch$-normal
modulo $Q$. \\

A slightly weaker notion, also introduced by
Goodearl, is that of {\em normal $\ch$-separation}. The algebra $A$ has normal
$\ch$-separation provided that for each pair of $\ch$-primes $Q\ssneq P$ there
is an $\ch$-eigenvector $c \in P\backslash Q$ which is normal modulo $Q$. 
Goodearl shows that in the situation that we are
considering, normal $\ch$-separation implies normal separation, see
\cite[Theorem 5.3]{good}. 

Notice that, as explained in paragraph 5.1 of \cite{good}, the action of $\ch$ induces a grading on 
$A$ by a suitable free abelian group. 
Using this grading, it is easy to see that $A$ has normal $\ch$-separation if and only if for each pair 
of $\ch$-primes $Q\ssneq P$ there
is an element $c \in P\backslash Q$ whose image in $A/Q$ is normal and an $\ch$-eigenvector. 
This fact will be freely used in the sequel.\\

Recall, from \cite[Definition 2.5]{llr}, the definition of a Cauchon
extension.
Let $A$ be a domain that is a noetherian $\k$-algebra and let
$R=A[X;\sigma,\delta]$ be a skew polynomial extension of $A$. We say that $R
=A[X;\sigma,\delta]$ is a {\em Cauchon Extension} provided that

\begin{itemize}

\item $\sigma$ is a $\k$-algebra automorphism of $A$ and
$\delta$ is a $\k$-linear locally nilpotent 
$\sigma$-derivation of $A$. Moreover we assume that there exists 
$q \in \k^*$ which is not a root of
unity such that $\sigma \circ \delta = q \delta \circ \sigma$.

\item There exists an abelian group $\ch$ which acts on $R$ by
$\k$-algebra 
automorphisms such that $X$ is an $\ch$-eigenvector and $A$ is
$\ch$-stable.

\item $\sigma$ coincides with the action on $A$ of an element $h_0 \in \ch$.

\item Since $X$ is an $\ch$-eigenvector and since $h_0 \in \ch$, there
 exists $\lambda_0 \in \k^*$ such that $h_0.X=\lambda_0
 X$. We assume that $\lambda_0$ is not a root of unity.

\item Every $\ch$-prime ideal of $A$ is completely prime. 

\end{itemize}


\begin{lemma} 
Suppose that $R = A[X;\sigma,\delta]$ is a Cauchon extension. Moreover, assume
that $\ch$ is a torus and that the action of $\ch$ on $R$ is rational. If $R$
has $\ch$-normal separation then $A$ has $\ch$-normal separation.
\end{lemma}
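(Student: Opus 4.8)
The plan is to lift the given pair of $\ch$-primes of $A$ to a pair of $\ch$-primes of $R$, separate there using the hypothesis, and then descend by extracting a leading coefficient. So let $Q\ssneq P$ be $\ch$-primes of $A$. First I would check that $Q$ (and likewise $P$) is stable under both $\sigma$ and $\delta$: stability under $\sigma$ is immediate, since $\sigma$ is the action of $h_0\in\ch$ and $Q$ is $\ch$-stable, while stability under $\delta$ is the standard fact that a $\sigma$-stable completely prime ideal is automatically $\delta$-stable when $\sigma\circ\delta=q\,\delta\circ\sigma$ with $q$ not a root of unity (complete primeness of the $\ch$-primes of $A$ is one of the Cauchon extension hypotheses). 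Consequently $QR=\bigoplus_{n\ge0}QX^n$ and $PR=\bigoplus_{n\ge0}PX^n$ are two-sided ideals with $QR\cap A=Q$ and $PR\cap A=P$, and $R/QR\cong(A/Q)[X;\bar\sigma,\bar\delta]$ is a domain (as $A/Q$ is a domain). Hence $QR$ and $PR$ are $\ch$-primes of $R$ with $QR\ssneq PR$.

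Next I would apply the hypothesis that $R$ has $\ch$-normal separation to this pair: there exist $c\in PR\setminus QR$ and $h\in\ch$ with $cr-h(r)c\in QR$ for all $r\in R$. Writing $B:=A/Q$ and passing to $\bar R:=R/QR\cong B[X;\bar\sigma,\bar\delta]$, whose induced $\ch$-action leaves $B$ stable and for which $\bar\sigma=h_0|_B$, the image $\bar c$ is a nonzero element satisfying $\bar c\,\bar r=h(\bar r)\,\bar c$ for all $\bar r\in\bar R$. Moreover $\bar c\in PR/QR=(P/Q)[X;\bar\sigma,\bar\delta]$, so all coefficients of $\bar c$ lie in $P/Q$.

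The key step is then to read off the leading coefficient. Write $\bar c=\sum_{k=0}^{d}b_kX^k$ with $b_k\in P/Q$ and $b_d\neq0$. Since $B$ is a domain, comparing the coefficients of $X^d$ on the two sides of $\bar c\,a=h(a)\,\bar c$ for $a\in B$ (using $X^k a=\bar\sigma^k(a)X^k+(\text{lower order})$ and $\bar\sigma=h_0|_B$) gives $b_d\,h_0^{\,d}(a)=h(a)\,b_d$ in $B$. As $a$ ranges over $B$ and $h_0^{\,d}$ is bijective, this shows $b_dB=Bb_d$, so $b_d$ is normal in $B$; and rewriting the identity as $b_d\,a'=(h\,h_0^{-d})(a')\,b_d$ for all $a'\in B$ shows that $b_d$ is $\ch$-normal modulo $Q$ with witnessing element $h\,h_0^{-d}\in\ch$. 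Since $b_d\in(P/Q)\setminus\{0\}$, any lift of $b_d$ lies in $P\setminus Q$, giving the required separating element for $Q\ssneq P$ in $A$.

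The only genuine obstacle is this last extraction: a normal element of $R$ need not contract into $A$, so the real content is that the leading coefficient of an $\ch$-normal element inherits normality, which is exactly what the degree computation in the domain $B=A/Q$ delivers. The preliminary $\delta$-stability of $\ch$-primes is the other point to handle with care, but it is a routine consequence of the $q$-skew relation together with complete primeness.
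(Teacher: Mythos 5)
You have a genuine gap, and it sits exactly where you wave it away as ``routine'': an $\ch$-prime (equivalently, a completely prime $\sigma$-stable) ideal of $A$ need \emph{not} be $\delta$-stable, even when $\delta$ is locally nilpotent and $\sigma\circ\delta=q\,\delta\circ\sigma$ with $q$ not a root of unity. Concretely, take the quantum Weyl algebra: $A=\k[y]$, $\sigma(y)=qy$, $\delta$ the $\sigma$-derivation with $\delta(y)=1$ (so $\delta(y^n)=[n]_q y^{n-1}$, locally nilpotent, and $\sigma\circ\delta=q^{-1}\delta\circ\sigma$), and $R=A[X;\sigma,\delta]$, i.e.\ $Xy=qyX+1$. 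The torus $\ch=\k^*$ acts rationally by $t\cdot y=ty$, $t\cdot X=t^{-1}X$, the element $h_0=q\in\ch$ implements $\sigma$ with $\lambda_0=q^{-1}$ not a root of unity, and every $\ch$-prime of the commutative ring $A$ is completely prime; so every axiom of a Cauchon extension holds. Yet the $\ch$-prime $Q=\langle y\rangle$ of $A$ satisfies $\delta(y)=1\notin Q$, the set $QR=yR$ is not a two-sided ideal, and no $\ch$-prime of $R$ contracts to $Q$. The same failure occurs in the motivating example: in $\coq(M_2(\k))$, viewed over the quantum affine space $A$ generated by $x_{11},x_{12},x_{21}$ with $X=x_{22}$, the $\ch$-prime $\langle x_{11}\rangle$ of $A$ is not $\delta$-stable, since $\delta(x_{11})=-(q-q^{-1})x_{12}x_{21}$. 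So your opening step collapses, and with it the decomposition $R/QR\cong(A/Q)[X;\bar\sigma,\bar\delta]$ and everything built on it. (You are perhaps misremembering the true fact that if an $\ch$-prime $J$ of $R$ \emph{contains} $X$, then $J\cap A$ is $\delta$-stable, since $\delta(a)=Xa-\sigma(a)X\in J\cap A$; that does not apply to arbitrary $\ch$-primes of $A$.)

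This obstruction is precisely what Cauchon's deleting-derivations map exists to remove, and it is how the paper argues: first one checks (easily, by contracting primes and adjusting by powers of $X$) that $\ch$-normal separation passes from $R$ to the localization $\widehat{R}=R[X^{-1}]$; then one replaces $A$ by its twisted copy $B=\theta(A)\subseteq\widehat{R}$, where $\theta(a)=\sum_{n\geq0}\frac{(1-q)^{-n}}{[n]!_q}\,\delta^n\sigma^{-n}(a)X^{-n}$, so that $\widehat{R}=B[X,X^{-1};\alpha]$ with \emph{no} derivation and $\alpha$ given by the action of an element of $\ch$. In that ring the extension $\widehat{Q}=\oplus_{i\in\mz}QX^i$ of an $\ch$-prime of $B$ genuinely is an $\ch$-prime (the paper cites \cite[Theorem 2.3]{llr}), and the rest of the paper's proof is essentially your endgame: write the separating element as $c=\sum_i c_iX^i$, compare coefficients in $cb-h(b)c\in\widehat{Q}$, and twist by $\alpha^{-i_0}$ (legitimate because $\alpha$ comes from $\ch$) to see that a coefficient $c_{i_0}\in P\setminus Q$ is $\ch$-normal modulo $Q$. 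So your coefficient-extraction computation is the right final step; what is missing --- and is the real content of the lemma --- is the $\theta$-twist that makes the lifting of $\ch$-primes from the coefficient ring legitimate in the presence of a nonzero $\delta$.
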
 

\begin{proof}
First, note that $\{X^n\}$ is an Ore set in $R$, by \cite[Lemme 2.1]{c1};
and so we can form the Ore localization
$\widehat{R}:=RS^{-1}=S^{-1}R$. As $X$ is an $\ch$-eigenvector, the rational
action of $\ch$ on $R$ extends to a rational action on $\widehat{R}$. 
We claim that $\widehat{R}$ has $\ch$-normal separation. Suppose that $Q\ssneq
P$ are $\ch$-prime ideals of
$\widehat{R}$. Then $Q\cap R\ssneq P\cap R$ are distinct $\ch$-prime ideals of
$R$. Thus, there exist an element $c\in (P\cap R)\backslash(Q\cap R)$ and an
element $h\in\ch$ such that $cr-h(r)c \in Q\cap R$ for all $r\in R$ . 
In particular, $cX - \lambda Xc = cX-h(X)c \in Q\cap R$ for some
$\lambda\in\k^*$, as $X$ is an $\ch$-eigenvector. From this it is easy to
calculate that $(\lambda X)^{-k}c-cX^{-k}\in Q$. Now, let $y=rX^{-k}$ be an
element of $\widehat{R}$. Then, working modulo $Q$, we calculate 
\[
cy=crX^{-k} =h(r)(\lambda X)^{-k}c = h(r)h(X^{-k})c = h(rX^{-k})c = h(y)c; 
\]
so that $\widehat{R}$ has $\ch$-normal separation, as claimed.

For each $a\in A$, set 

$$\theta (a) = \sum_{n=0}^{+ \infty} \frac{(1-q)^{-n}}{[n]!_q}
\delta^n \circ \sigma^{-n} (a) X^{-n} \in\widehat{R}  $$

(Note that $\theta(a)$ is a well-defined element of $\wr$, since
$\delta$ is locally nilpotent, $q$ is not a root of unity, 
and $0\neq 1-q\in \k$.) 

The following facts are established in \cite[Section 2]{c1}. The map
$\theta:A \goesto \wr$ is a $\k$-algebra monomorphism. Let
$A[Y;\sigma]$ be a skew polynomial extension. Then $\theta$ extends to a
monomorphism $\theta: A[Y;\sigma] \goesto \wr$ with $\theta(Y) = X$.
Set $B = \theta(A)$ and $T = \theta(A[Y;\sigma]) \sse \wr$. Then $T =
B[X;\alpha]$, where $\alpha$ is the automorphism of $B$ defined by
$\alpha(\theta(a)) = \theta(\sigma(a))$.

The element $X$ is a normal element in $T$, and so the set $S$ is an Ore set
in $T$ and Cauchon shows that $TS^{-1}= S^{-1}T = \wr$. Thus, $\wr=
B[X,X^{-1};\alpha]$. Also, the $\ch$-action transfers to $B$ via $\theta$, by
\cite[Lemma 2.6]{llr}. Note, in particular, that $\alpha$ coincides with the action of an element of $\ch$ on $B$.

Thus, it is enough to show that $B$ has $\ch$-normal separation, given that 
$B[X,X^{-1};\alpha]$ has $\ch$-normal separation. 

Let $Q\ssneq P$ be $\ch$-prime ideals of $B$. Set $\widehat{Q} =
\oplus_{i\in\mz}\,QX^i$ and $\widehat{P} = \oplus_{i\in\mz}\,PX^i$. Then
$\widehat{Q}\cap B = Q$ and $\widehat{P}\cap B = P$, and it follows that
$\widehat{Q}\ssneq\widehat{P}$ are $\ch$-prime ideals in $B[X,X^{-1};\alpha]$,
see \cite[Theorem 2.3]{llr}. As $B[X,X^{-1};\alpha]$ has $\ch$-normal
separation, there is an element $c\in \widehat{P}\backslash\widehat{Q}$ and an
element $h\in\ch$ such that $cs -h(s)c \in \widehat{Q}$, for each $s\in
B[X,X^{-1};\alpha]$. Now, write $c = \sum_{i\in\mz}\,c_iX^i$. Note that each
$c_i\in P$ and at least one $c_i\not\in Q$, say $c_{i_0}\not\in Q$. Let $b\in
B$. Then, $cb-h(b)c \in \widehat{Q}$. Therefore, 
$
\sum_i\, c_iX^ib - h(b)c_iX^i \in \widehat{Q}$; and so
\[
\sum_i\, (c_i\alpha^i(b) - h(b)c_i)X^i \in \widehat{Q}
\]
As $\widehat{Q} = \oplus_{i\in\mz}\,QX^i$, this forces 
$ c_i\alpha^i(b) - h(b)c_i\in Q$ 
for each $i$, and, in particular,
$c_{i_0}\alpha^{i_0}(b) - h(b)c_{i_0}\in Q$. 
As $b$ was an arbitrary element 
of $B$, we may replace $b$ by $\alpha^{-i}(b)$ to obtain
\[
c_{i_0}b - h\alpha^{-i}(b)c_{i_0}\in Q
\]
As  
$\alpha$ coincides with the action of an element of $\ch$ on $B$, 
this produces an element $h_{i_0}\in\ch$ such that 
\[
c_{i_0}b - h_{i_0}(b)c_{i_0}\in Q, 
\]
as required to show that $B$ has $\ch$-normal separation. 
\qed\end{proof}


\begin{theorem}
Let $\lambda = (\lambda_1, \lambda_2, \dots, \lambda_m)$ be a partition with
$n\geq \lambda_1 \geq \lambda_2\geq \dots \geq \lambda_m \geq 0$ and let
$\alambda$ be the corresponding partition subalgebra of generic quantum
matrices. Then $\alambda$ has $\ch$-normal separation.
\end{theorem}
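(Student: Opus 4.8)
The plan is to proceed by induction on the number of boxes $N=\sum_{i=1}^m\lambda_i$ of $\lambda$, the cases $N\le 1$ being trivial. By the reformulation of normal $\ch$-separation recorded above, it suffices, for a given pair of $\ch$-primes $Q\ssneq P$ of $\alambda$, to exhibit an $\ch$-eigenvector $c\in P\setminus Q$ whose image in $\alambda/Q$ is normal (this then yields $\ch$-normality of $c$ modulo $Q$, since conjugation by a homogeneous normal element is implemented by an element of the torus). The observation driving the induction is that the top-right corner variable $z:=x_{1,\lambda_1}$ is a normal $\ch$-eigenvector of $\alambda$: reading off the four families of defining relations, $z$ $q$-commutes with the generators lying in its row and in its column and commutes with all remaining generators, because no generator of $\alambda$ lies strictly below and to the right of the position $(1,\lambda_1)$, so the quadratic correction $(q-q^{-1})x_{il}x_{kj}$ never occurs. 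Hence $\ideal{z}$ is an $\ch$-invariant normal ideal and $\{z^k\}_{k\ge 0}$ is an Ore set.

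I would then distinguish three cases according to the position of $z$. If $z\in P\setminus Q$, then $c=z$ settles the matter immediately. Otherwise, set $\mu=(\lambda_1-1,\lambda_2,\dots,\lambda_m)$; since conjugation by the normal element $z$ rescales each generator by a power of $q$, and this rescaling is the action of a suitable element of $\ch$, one checks that $\alambda/\ideal{z}\cong\ca_\mu$ and that $\alambda[z^{-1}]\cong\ca_\mu[z^{\pm 1};\tau]$ for a diagonal automorphism $\tau$. If $z\in Q$ (hence $z\in P$), I pass to $\alambda/\ideal{z}\cong\ca_\mu$: by the inductive hypothesis $\ca_\mu$ has $\ch$-normal separation, and a separating normal eigenvector for the images $\overline Q\ssneq\overline P$ lifts, through the $\ch$-graded quotient map, to the required element $c$. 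If $z\notin P$ (hence $z\notin Q$), then $Q$ and $P$ survive in $\alambda[z^{-1}]\cong\ca_\mu[z^{\pm 1};\tau]$ as $\widehat Q\ssneq\widehat P$, and I would transfer $\ch$-normal separation from $\ca_\mu$ to the skew-Laurent extension over it and then clear the denominator, multiplying the separating element by a power of the eigenvector $z$, which keeps it in $P\setminus Q$ precisely because $z\notin Q$.

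The skew-Laurent transfer used in the third case is the easy half of the mechanism behind the preceding Cauchon extension lemma: via the $\ch$-prime correspondence $\widehat Q=\bigoplus_{i\in\mz}Qz^i$ of \cite[Theorem 2.3]{llr}, a normal eigenvector separating $Q\ssneq P$ in $\ca_\mu$ remains normal modulo $\widehat Q$ and lands in $\widehat P\setminus\widehat Q$, because $z$ is normal and $\tau$ is the restriction of an element of $\ch$. The step I expect to be the real obstacle is keeping every reduced algebra inside the class over which I induct: when $\lambda_1=\lambda_2$ the position $(1,\lambda_1)$ is not a removable corner, so $\mu$ is not a partition and $\ca_\mu$ is, strictly speaking, no longer a partition subalgebra but a subalgebra of quantum matrices supported on a convex region (frequently just a quantum affine space). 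The clean way around this is to carry out the induction over this slightly larger class of CGL extensions --- each of which again possesses a normal corner variable of the above type --- and to check that it is closed under the two reductions $A\mapsto A/\ideal{z}$ and $A\mapsto A[z^{-1}]$; the non-$\ch$-equivariance of the identifications, flagged in the counting lemma, causes no trouble, since the relabelling involved preserves the weight-space decomposition and hence the very notions of $\ch$-eigenvector and of $\ch$-prime.
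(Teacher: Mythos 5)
There is a genuine gap, and it sits exactly where you suspected trouble but misdiagnosed its nature: the localization step of your third case. The claimed isomorphism $\alambda[z^{-1}]\cong\ca_\mu[z^{\pm 1};\tau]$ is false whenever $\lambda_1=\lambda_2$, and enlarging the inductive class to subalgebras supported on convex regions does not repair it. Take $\lambda=(2,2)$ with $m=n=2$, so $\alambda=\coq(M_2(\k))$ and $z=x_{12}$. The relation $x_{11}x_{22}-x_{22}x_{11}=(q-q^{-1})x_{12}x_{21}$ shows that the subalgebra $B$ generated by the remaining variables $x_{11},x_{21},x_{22}$ already contains $x_{12}x_{21}=x_{21}z$; hence $B\cap Bz\neq 0$, the sum $\sum_{k}Bz^{k}$ is not direct, and no skew-Laurent decomposition of $\alambda[z^{-1}]$ over $B$ exists. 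Indeed $B$ contains the four quasi-commuting elements $x_{11},x_{21},x_{22},x_{12}x_{21}$, so it has the same Gelfand-Kirillov dimension as $\coq(M_2(\k))$ and cannot serve as a coefficient ring of the kind you need. For the same reason, $\alambda/\ideal{z}$ is \emph{not} the subalgebra of $\oqmmn$ supported on the remaining region (that subalgebra also contains $x_{12}x_{21}$ and is strictly larger); it is the iterated Ore extension in which the straddling correction terms have been deleted. Producing the skew-Laurent decomposition you want requires a genuine Cauchon-style change of variables, and \cite[Th\'eor\`eme 3.2.1]{c1} provides this only when one inverts the \emph{last} variable $x_{m,\lambda_m}$ in the lexicographic ordering --- which, unlike your $z$, is not normal, so your clean trichotomy ($z\in P\setminus Q$, $z\in Q$, $z\notin P$) collapses there too. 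In short, normality of the corner generator alone does not power the induction; the third case silently needs the full deleting-derivations apparatus, at which point you are essentially reproving Cauchon's normal separation theorem for quantum matrices.

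Two further remarks. First, your opening reduction asserts that a normal $\ch$-eigenvector modulo $Q$ is automatically $\ch$-normal modulo $Q$ because "conjugation by a homogeneous normal element is implemented by an element of the torus"; this is unjustified --- a graded automorphism of $\alambda/Q$ need not lie in the torus --- and it is precisely the gap between normal $\ch$-separation and the strictly stronger $\ch$-normal separation that the theorem asserts; the implication used in the paper goes the other way. Second, for comparison, the paper's proof runs in the opposite direction to yours: it starts from $\ca_\mu=\coq(M_{m,n}(\k))$ with $\mu=(n,\dots,n)$, where $\ch$-normal separation is Cauchon's \cite[Th\'eor\`eme 6.3.1]{c2}, and descends to $\alambda$ by adjoining the missing variables one at a time in lexicographic order, observing that each one-variable step is a Cauchon extension and invoking the lemma proved just beforehand, which transfers $\ch$-normal separation from $A[X;\sigma,\delta]$ down to $A$. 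That downward route is what lets the paper avoid ever decomposing a localization at a non-last variable, which is exactly the step your proposal cannot supply.
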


\begin{proof} 
Let $\mu=(n,\dots,n)$ ($m$ times); so that $Y_\mu$ is an $m 
\times n$ rectangle. Then $\ca_\mu= \coq(M_{m,n}(\k))$; 
and so $\ca_\mu$ has
$\ch$-normal separation, by \cite[Th\'eor\`eme 6.3.1]{c2}. We can construct
$\ca_\mu$ from $\ca_\lambda$ by adding the missing variables
$x_{ij}$ in lexicographic order. At each stage, the extension is a Cauchon
extension. Thus, $\ca_\lambda$ has $\ch$-normal separation, by repeated
application of the previous lemma. \qed\end{proof} 


\begin{corollary}
Let $\lambda = (\lambda_1, \lambda_2, \dots, \lambda_m)$ be a partition with
$n\geq \lambda_1 \geq \lambda_2\geq \dots \geq \lambda_m \geq 0$ and let
$\alambda$ be the corresponding partition subalgebra of generic quantum
matrices. Then $\alambda$ has normal $\ch$-separation and normal separation.
\end{corollary}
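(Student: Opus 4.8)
The plan is to obtain both conclusions from the preceding Theorem, which already gives that $\alambda$ has $\ch$-normal separation, combined with the two facts recorded just before it. First I would fix a pair of $\ch$-prime ideals $Q \ssneq P$ of $\alambda$. By the preceding Theorem there is an element of $P \setminus Q$ that is $\ch$-normal modulo $Q$, and, following Goodearl's observation that such an element may be taken to be an $\ch$-eigenvector, I would choose it to be an $\ch$-eigenvector $c$. Thus there is $h \in \ch$ with $ca - h(a)c \in Q$ for every $a \in \alambda$.

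The crux is to upgrade this twisted relation to genuine normality in the quotient. Since $Q$ is $\ch$-invariant, $h$ descends to an algebra automorphism $\overline{h}$ of $\alambda/Q$, and, writing $\overline{(\,\cdot\,)}$ for images in $\alambda/Q$, the relation above becomes $\overline{c}\,\overline{a} = \overline{h}(\overline{a})\,\overline{c}$ for all $a \in \alambda$. As $\overline{h}$ is surjective this yields $\overline{c}\,(\alambda/Q) = (\alambda/Q)\,\overline{c}$, so the image of $c$ is normal in $\alambla/Q$. Hence $c$ is at once an $\ch$-eigenvector and normal modulo $Q$, which is exactly the criterion for normal $\ch$-separation recorded (via the grading induced by the $\ch$-action) before the Theorem. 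Since the pair $Q \ssneq P$ was arbitrary, $\alambda$ has normal $\ch$-separation.

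Finally I would appeal to \cite[Theorem 5.3]{good}, which in the present setting guarantees that normal $\ch$-separation implies normal separation; this settles the remaining assertion. I do not anticipate a real difficulty here: the only delicate point is the passage from $ca - h(a)c \in Q$ to $\overline{c}\,(\alambda/Q) = (\alambda/Q)\,\overline{c}$, where one must use both that $Q$ is $\ch$-invariant (so that $h$ acts on the quotient) and that the induced automorphism $\overline{h}$ is onto.
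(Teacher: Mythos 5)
Your proposal is correct and takes essentially the same route as the paper: the paper's proof likewise cites the preceding theorem for $\ch$-normal separation, invokes the implication ``$\ch$-normal separation implies normal $\ch$-separation'' recorded earlier (following Goodearl's observation that the $\ch$-normal element may be chosen to be an $\ch$-eigenvector), and then applies \cite[Theorem 5.3]{good} for normal separation. Your only addition is to spell out the easy bridging step---that $ca-h(a)c\in Q$ for an $\ch$-invariant $Q$ descends to $\overline{c}\,\overline{a}=\overline{h}(\overline{a})\,\overline{c}$ in $\alambda/Q$, whence normality of $\overline{c}$ by surjectivity of $\overline{h}$---which the paper leaves implicit, and this verification is sound.
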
 

\begin{proof} We have seen earlier that $\ch$-normal separation implies normal
$\ch$-separation. Normal separation now follows from \cite[Theorem 5.3]{good}.
\qed\end{proof}

\begin{corollary}\label{partition-catenary}
Let $\lambda = (\lambda_1, \lambda_2, \dots, \lambda_m)$ be a partition with
$n\geq \lambda_1 \geq \lambda_2\geq \dots \geq \lambda_m \geq 0$ and let
$\alambda$ be the corresponding partition subalgebra of generic quantum
matrices. Then $\alambda$ is catenary.
\end{corollary}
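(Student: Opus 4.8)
The plan is to deduce catenarity from the normal separation property just established, together with standard dimension-theoretic machinery for noetherian algebras. The key external input is a theorem of Goodearl and Lenagan (available in the cited literature, e.g.\ \cite[Theorem 5.3]{good} or its antecedents) to the effect that an affine noetherian $\k$-algebra which is a domain, satisfies normal separation, and whose prime spectrum is well behaved with respect to Gelfand--Kirillov dimension, is catenary. Concretely, the standard route is to verify that $\alambda$ satisfies the hypotheses of Tauvel's height formula: one needs that $\alambda$ is a finitely generated $\k$-algebra (immediate, since it is generated by the finitely many $x_{ij}$ with $j \le \lambda_i$), that it is a noetherian domain (observed in Section~\ref{section-partitions}, where $\alambda$ is presented as an iterated Ore extension), and that $\GKdim$ is finite and exact on the relevant short exact sequences.

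First I would record that $\alambda$ is an affine noetherian domain with finite Gelfand--Kirillov dimension; the iterated Ore extension presentation gives both noetherianity and the fact that $\GKdim \alambda$ equals the number of generators, which is finite. Second, I would invoke the previous corollary, which gives that $\alambda$ has normal separation. Third, I would combine these with the Goodearl--Lenagan catenarity criterion: for an affine noetherian domain over a field satisfying normal separation and the Tauvel height formula $\height P + \GKdim(\alambda/P) = \GKdim \alambda$ for every prime $P$, catenarity is automatic, since any saturated chain from $Q$ to $P$ then has length exactly $\height P - \height Q$, independent of the chain.

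The one point requiring genuine verification is the Tauvel height formula itself, and this is where I expect the main (though routine) work to lie. For iterated Ore extensions of the CGL type the formula is known, but one should confirm it applies verbatim to partition subalgebras; the cleanest argument is that $\alambda$, being a CGL extension by Proposition~\ref{prop-partition-cgl}, falls under the general framework in which normal separation plus finite $\GKdim$ forces the height formula to hold (see the results of Goodearl--Letzter, \cite[Chapter II]{bg}). Alternatively, one could establish the height formula directly by induction on the number of variables, using that each Ore extension step increases both $\GKdim$ and the height of a suitable prime by exactly one.

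The hard part is therefore not the logical structure but citing or supplying the correct form of the catenarity theorem: one must ensure that the version of Goodearl's result being applied genuinely yields catenarity from normal separation in the noncommutative noetherian setting, rather than merely normal separation as a standalone property. Granting that citation, the proof is short, and I would write it as: \emph{By the previous corollary, $\alambda$ has normal separation; since $\alambda$ is an affine noetherian domain with finite Gelfand--Kirillov dimension, catenarity follows from} \cite[Theorem 5.3]{good} \emph{(or the Goodearl--Lenagan catenarity criterion).}
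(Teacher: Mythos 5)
Your overall strategy --- deducing catenarity from the normal separation just established via a general catenarity criterion --- is indeed the paper's strategy. The paper's own proof cites \cite[Theorem 0.1]{yz}: a normally separated filtered $\k$-algebra $A$ such that ${\rm gr}(A)$ is a noetherian connected graded $\k$-algebra with enough normal elements is catenary; the iterated Ore extension presentation of $\alambda$ supplies such a filtration, with associated graded ring a quantum affine space in which every generator is normal. The paper also remarks that \cite[Theorem 1.6]{glcat}, the Goodearl--Lenagan criterion you gesture at, gives an alternative route. So the plan is sound in outline.

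As written, however, your proof has a genuine gap: you misstate what the criterion requires. In \cite[Theorem 1.6]{glcat} the hypotheses are that $A$ be an affine noetherian $\k$-algebra which is Auslander--Gorenstein and Cohen--Macaulay with respect to $\GKdim$ (with $\GKdim A < \infty$), together with normal separation of $\spec(A)$; catenarity \emph{and} Tauvel's height formula $\height P + \GKdim(A/P) = \GKdim A$ are the \emph{conclusions}. Your proposal inverts this structure: you treat the height formula as an input to be verified separately and assert that ``normal separation plus finite $\GKdim$ forces the height formula to hold,'' which is false in general --- without the homological (Auslander--Gorenstein and GK--Cohen--Macaulay) hypotheses, or the enough-normal-elements condition on an associated graded ring as in \cite{yz}, neither catenarity nor the height formula follows from normal separation alone. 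Those homological conditions are never checked in your argument, yet checking them (for instance, by noting that an iterated Ore extension of $\k$ is Auslander-regular and GK--Cohen--Macaulay, or by exhibiting the filtration with ${\rm gr}(\alambda)$ having enough normal elements) is precisely the substantive content that must accompany the normal separation corollary. Finally, your citation \cite[Theorem 5.3]{good} points at the wrong result: in this paper that theorem is invoked to pass from normal $\ch$-separation to normal separation; it is not a catenarity criterion, so the one-line proof you propose would not compile logically as stated.
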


\begin{proof} This follows from the previous results and \cite[Theorem 0.1]{yz} which
states that if $A$ is a normally separated filtered $\k$-algebra such that
${\rm gr}(A)$ is a noetherian connected graded $\k$-algebra with enough normal
elements then $\spec(A)$ is catenary. (For the notion of an algebra with
enough normal elements see \cite{z}.)\qed\end{proof}~\\

Note that it is also possible to deduce this result from \cite[Theorem
1.6]{glcat}


\section{Quantum Schubert cells} \label{section-cells} 


Quantum Schubert cells in the quantum grassmannian are obtained from quantum
Schubert varieties via the process of noncommutative dehomogenisation
introduced in \cite{klr}. Recall that if $R = \oplus R_i$ is an $\mn$-graded
$\k$-algebra and $x$ is a regular homogeneous normal element of $R$ of degree
one, then the {\em dehomogenisation} of $R$ at $x$, written $\dhom(R,x)$, is
defined to be the zero degree subalgebra $S_0$ of the $\mz$-graded algebra
$S:= R[x^{-1}]$. If $R$ is generated as a $\k$-algebra by $a_1, a_2, \dots,
a_s$ and each $a_i$ has degree one, then it is easy to check that $\dhom(R,x)
= k[a_1x^{-1}, \dots, a_sx^{-1}]$. 
\\

If $\sigma$ denotes the automorphism of $S$ given by $\sigma(s) =
xsx^{-1}$ for $s\in S$ then $\sigma$ induces an automorphism of
$S_0$, also denoted by $\sigma$, and there is an isomorphism
$$\theta:\dhom(R,x)[y,y^{-1}; \sigma] \goesto R[x^{-1}]$$
which is the identity on
$\dhom(R,x)$ and sends $y$ to $x$.
\\


Let $\gamma\in\Pi_{m,n}$. 
Recall from Remark~\ref{remark-single-min-elt} that 
$S(\gamma) = {\mathcal O}_q(G_{m,n}(\k))/\langle \Pi_{m,n}^\gamma \rangle$ 
and that $\wbar{\gamma}$
is a homogeneous regular normal element of degree one in $S(\gamma)$. It
follows that we can form the localisation $S(\gamma)[\wbar{\gamma}^{-1}]$ and
that $S(\gamma)\sse S(\gamma)[\wbar{\gamma}^{-1}]$.


\begin{definition}The {\em quantum Schubert cell} associated to the quantum
minor $\gamma$ is denoted by $\sogamma$ and is defined to be
$\dhom(\sgamma,\wbar{\gamma})$.
\end{definition} 


\begin{remark} 
{\rm In the classical case when $q =1$, it can be seen that 
this definition coincides with the usual definition of Schubert cells, 
as discussed, for example, in \cite[Section 9.4]{fult}}
\end{remark}

It follows from the definition that $\sogamma$ is generated by the elements
$\wbar{x}\,\wbar{\gamma}^{-1}$, for 
$x\in \Pi_{m,n}\setminus(\Pi_{m,n}^\gamma\cup\{\gamma\})$. 
However, these elements are not
independent; so we will pick out a better generating set for the quantum
Schubert cell.


This is achieved by using the quantum ladder matrix
algebras introduced in \cite[Section 3.1]{lr3}. Let us recall the definition.
To each $\gamma=(\gamma_1,\dots,\gamma_m)\in\Pi_{m,n}$, with $1 \le \gamma_1<
\dots < \gamma_m\le n$, we associate the substet ${\mathcal L}_\gamma$ of
$\{1,\dots,m\} \times \{1,\dots,n\}$ defined by \[ {\mathcal L}_\gamma =
\{(i,j) \in \{1,\dots,m\} \times \{1,\dots,n\} \;\tq\; j > \gamma_{m+1-i}
\quad\mbox{and}\quad j\neq \gamma_\ell \quad\mbox{for}\quad 1 \le \ell \le
m\}, \] which we call the {\em ladder} associated with $\gamma$.


Consider the quantum minors $m_{ij}$ defined by
$m_{ij}:=[\{\gamma_1,\dots,\gamma_m\} \setminus\{\gamma_{m+1-i}\} \cup
\{j\}]$, for each $(i,j)\in{\mathcal L}_\gamma$. 
These are the quantum minors that are
above $\gamma$ in the standard order and differ from $\gamma$ in exactly one
position. Denote the set of these quantum minors by $\mcgamma$. 


\begin{proposition}
$\sogamma = \k[\wbar{m_{ij}}\,\wbar{\gamma}^{-1} \mid m_{ij}\in\mcgamma]$
\end{proposition} 


\begin{proof} 
In the proof of \cite[Theorem 3.1.6]{lr3} it is shown that
$S(\gamma)[\wbar{\gamma}^{-1}]$ is generated by the elements $\wbar{\gamma},
\wbar{\gamma}^{-1}$ and the $\wbar{m_{ij}}$. The Schubert cell $\sogamma$ is
the degree zero part of this algebra. As $\wbar{\gamma}$ and $\wbar{m_{ij}}$
commute up to scalars, see \cite[Lemma 3.1.4(v)]{lr3}, 
it is easy to check that $\sogamma$ is generated by
$\wbar{m_{ij}}\,\wbar{\gamma}^{-1}$, as required.
\qed\end{proof}\\


Set $\widetilde{m_{ij}}:= \wbar{m_{ij}}\,\wbar{\gamma}^{-1}$. 


\begin{lemma}\label{lemma-action-on-mijtilde}
There is an induced action of $\ch = (\k^*)^n$ on $\sogamma$ given by 
\[
(\alpha_1, \alpha_2, \dots, \alpha_n)\circ\widetilde{m_{ij}} := 
\alpha^{-1}_{\gamma_{m+1-i}}\alpha_j\widetilde{m_{ij}}.
\]
\end{lemma}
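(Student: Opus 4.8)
The plan is to track how the torus action on $\oqgmn$ (equivalently on $S(\gamma)$) propagates through the localisation and dehomogenisation, and then compute the eigenvalue of each generator $\widetilde{m_{ij}} = \wbar{m_{ij}}\,\wbar{\gamma}^{-1}$ directly. First I would recall that each maximal quantum minor $[I]$ is an $\ch$-eigenvector for $\ch = (\k^*)^n$, with $(\alpha_1,\dots,\alpha_n)\circ[i_1,\dots,i_m] = \alpha_{i_1}\cdots\alpha_{i_m}[i_1,\dots,i_m]$, as stated in Section~\ref{sec-basic-definitions}. Since $\Pi_{m,n}^\gamma$ is a homogeneous $\ch$-stable subset, the action descends to $S(\gamma)$, and the images $\wbar{m_{ij}}$ and $\wbar{\gamma}$ remain $\ch$-eigenvectors with the same eigenvalues. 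The action then extends to the localisation $S(\gamma)[\wbar{\gamma}^{-1}]$ by letting $\ch$ act on $\wbar{\gamma}^{-1}$ with the inverse eigenvalue of $\wbar{\gamma}$; this is the standard extension of a torus action to a localisation at an eigenvector, and it restricts to the degree-zero subalgebra $\sogamma$.

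The heart of the computation is then purely combinatorial. The minor $\gamma = [\gamma_1,\dots,\gamma_m]$ has $\ch$-eigenvalue $\alpha_{\gamma_1}\cdots\alpha_{\gamma_m}$, so $\wbar{\gamma}^{-1}$ has eigenvalue $(\alpha_{\gamma_1}\cdots\alpha_{\gamma_m})^{-1}$. By definition $m_{ij} = [\{\gamma_1,\dots,\gamma_m\}\setminus\{\gamma_{m+1-i}\}\cup\{j\}]$, so its index set differs from that of $\gamma$ by deleting $\gamma_{m+1-i}$ and inserting $j$. Hence the eigenvalue of $\wbar{m_{ij}}$ is $\alpha_{\gamma_{m+1-i}}^{-1}\alpha_j$ times the eigenvalue of $\wbar{\gamma}$, and multiplying by the eigenvalue of $\wbar{\gamma}^{-1}$ the common factor $\alpha_{\gamma_1}\cdots\alpha_{\gamma_m}$ cancels, leaving exactly
\[
(\alpha_1,\dots,\alpha_n)\circ\widetilde{m_{ij}} = \alpha_{\gamma_{m+1-i}}^{-1}\,\alpha_j\,\widetilde{m_{ij}},
\]
which is the claimed formula.

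It remains to confirm that this prescription on generators genuinely defines an action of $\ch$ by $\k$-algebra automorphisms on $\sogamma$, rather than merely an assignment of scalars. This is automatic: the action on $\sogamma$ is the restriction of the honest (rational) $\ch$-action on $S(\gamma)[\wbar{\gamma}^{-1}]$ to its degree-zero subalgebra, so automorphy, well-definedness, and rationality are inherited, and no independent verification of relations among the $\widetilde{m_{ij}}$ is needed. The only point requiring mild care --- and the closest thing to an obstacle --- is checking that the degree-zero subalgebra $\sogamma$ is genuinely $\ch$-stable inside the localisation; but since $\ch$ acts by homogeneous automorphisms preserving the $\mz$-grading induced by $\wbar{\gamma}$ (each generator $\widetilde{m_{ij}}$ is homogeneous of degree zero, being a product of a degree-one eigenvector with $\wbar{\gamma}^{-1}$), stability of each graded component is immediate. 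Thus the computation of eigenvalues, together with restriction of the localised action, suffices.
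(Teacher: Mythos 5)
Your proposal is correct and follows essentially the same route as the paper, whose entire proof is the one-line observation that $\widetilde{m_{ij}} = \wbar{m_{ij}}\,\wbar{\gamma}^{-1}$ with both factors $\ch$-eigenvectors, so the eigenvalue $\alpha_{\gamma_{m+1-i}}^{-1}\alpha_j$ falls out by cancellation exactly as you compute. The extra scaffolding you supply --- descent of the action to $S(\gamma)$, extension to the localisation at the eigenvector $\wbar{\gamma}$, and restriction to the degree-zero subalgebra --- is precisely what the paper leaves implicit (having noted the transfer to $S(\gamma)$ just before Definition~\ref{recall-q-gr-asl}), so your write-up is a correct, slightly more detailed version of the same argument.
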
 

\begin{proof} This follows immediately from the fact that 
\[
\widetilde{m_{ij}} = [\overline{\{\gamma_1, \dots,
\gamma_m\}\backslash\{\gamma_{m+1-i}\}\cup\{j\}]}\,\overline{[\gamma_1,\dots,
\gamma_m]}^{-1}.
\]
\qed\end{proof}~\\ 


We now need to establish the commutation relations between the
$\widetilde{m_{ij}}$. 


\begin{definition} -- \label{def-q-ladder-matrix} 
Let $\gamma=(\gamma_1,\dots,\gamma_m)\in\Pi_{m,n}$, with $1 \le \gamma_1<
\dots < \gamma_m\le n$. The quantum ladder matrix algebra associated with
$\gamma$, denoted ${\mathcal O}_q(M_{m,n,\gamma}(\k))$, is the $\k$-subalgebra
of ${\mathcal O}_q(M_{m,n}(\k))$ generated by the elements $x_{ij}\in
{\mathcal O}_q(M_{m,n}(\k))$ such that $(i,j)\in{\mathcal L}_\gamma$.
\end{definition}


The following example, taken from \cite{lr3} will help clarify this
definition.


\begin{example} -- \rm
We put $(m,n)=(3,7)$ and
$\gamma=(\gamma_1,\gamma_2,\gamma_3)=(1,3,6)\in\Pi_{3,7}$. In the $3 \times 7$
generic matrix ${\bf X}=\left(x_{ij}\right)$ associated with 
${\mathcal O}_q(M_{3,7}(\k))$, put a bullet on each row as follows: 
on the first row,
the bullet is in column $6$ because $\gamma_3$ is $6$, on the second row, the
bullet is in column $3$ because $\gamma_2$ is $3$ and on the third row, 
the bullet is
in column $1$ because $\gamma_1 = 1$. 
Now, in each position which is to the left of a bullet, or
which is below a bullet, put a star. To finish, place 
$x_{ij}$ in any  position $(i,j)$ that has not yet been filled. 
We obtain 
\[
\left(
\begin{array}{ccccccc}
 \ast & \ast & \ast & \ast & \ast & \bullet & x_{17} \cr   
 & & & & & & \cr 
\ast & \ast & \bullet & x_{24} & x_{25} & \ast & x_{27} \cr
 & & & & & & \cr
\bullet & x_{32} & \ast & x_{34} & x_{35} & \ast & x_{37} \cr
\end{array}
\right) .
\]
By definition, the quantum ladder matrix algebra associated 
with $\gamma=(1,3,6)$ is the subalgebra of ${\mathcal O}_q(M_{3,7}(\k))$ 
generated by the elements
$x_{17}, x_{24}, x_{25}, x_{27}, x_{32}, x_{34}, x_{35}, x_{37}$.  
\end{example}


Notice that if we rotate the matrix above through $180^\circ$ then the
$x_{ij}$ involved in the definition of ${\mathcal O}_q(M_{3,7,\gamma}(\k))$
sit naturally in the Young Diagram of the partition $\lambda = (4,3,1)$. We
will return to this point later.


\begin{lemma} 
The quantum Schubert cell $\sogamma$ is isomorphic to the quantum ladder
matrix algebra ${\mathcal O}_q(M_{m,n,\gamma}(\k))$. 
\end{lemma}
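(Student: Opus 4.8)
The plan is to write down the evident map on generators and prove it is an isomorphism by combining the commutation relations among quantum minors from \cite{lr3} with a Gelfand--Kirillov dimension count. Since $\sogamma = \k[\widetilde{m_{ij}} \mid m_{ij}\in\mcgamma]$ by the preceding Proposition, and since the quantum ladder matrix algebra ${\mathcal O}_q(M_{m,n,\gamma}(\k))$ is an iterated Ore extension in the variables $x_{ij}$, $(i,j)\in\L_\gamma$ (see \cite{lr3}) and is therefore presented by the corresponding restriction of the quantum matrix relations, I would define a $\k$-algebra homomorphism
\[
\phi : {\mathcal O}_q(M_{m,n,\gamma}(\k)) \goesto \sogamma, \qquad x_{ij} \mapsto \widetilde{m_{ij}} \quad ((i,j)\in\L_\gamma),
\]
possibly after rescaling each generator by a nonzero scalar. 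Once $\phi$ is shown to be well defined it is automatically surjective, since its image contains every generator of $\sogamma$. To define it via the presentation, it suffices to check that the $\widetilde{m_{ij}}$ satisfy the restricted quantum matrix relations.

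This verification is the central step, and I expect it to be the main obstacle. I would first record the commutation relations among the minors $\wbar{m_{ij}}$ inside $\sgamma[\wbar\gamma^{-1}]$, together with the relations between each $\wbar{m_{ij}}$ and $\wbar\gamma$: the latter is \cite[Lemma 3.1.4(v)]{lr3}, and the relations among the $\wbar{m_{ij}}$ themselves follow from the quantum Pl\"ucker / straightening relations provided by the quantum graded A.S.L. structure of $\sgamma$. Because $\wbar\gamma$ is a normal element commuting with each $\wbar{m_{ij}}$ up to a scalar, these relations descend cleanly to the quotients $\widetilde{m_{ij}} = \wbar{m_{ij}}\,\wbar\gamma^{-1}$: dividing through by the appropriate powers of $\wbar\gamma$ cancels all dependence on $\wbar\gamma$ and leaves exactly the four families of quantum matrix relations, now indexed by $\L_\gamma$. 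The real content lies in extracting the precise scalars in the minor relations and confirming that, under the chosen indexing and rescaling, they reproduce the quantum matrix relations verbatim; this bookkeeping is where the difficulty sits.

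Finally I would establish injectivity by a dimension argument. Both algebras are noetherian domains: the ladder algebra is an iterated Ore extension in $|\L_\gamma|$ variables, hence a domain with $\GKdim {\mathcal O}_q(M_{m,n,\gamma}(\k)) = |\L_\gamma|$, while $\sogamma$ is a domain as the degree-zero part of the localisation $\sgamma[\wbar\gamma^{-1}]$ of the domain $\sgamma$. From the dehomogenisation isomorphism $\sogamma[y,y^{-1};\sigma]\cong \sgamma[\wbar\gamma^{-1}]$ recalled above one gets $\GKdim\sogamma = \GKdim\sgamma - 1 = |\L_\gamma|$, the last equality coming from the A.S.L. Hilbert series of $\sgamma$ on $\Pi_{m,n}\setminus\Pi_{m,n}^\gamma$ (equivalently, the classical comparison with the Schubert cell). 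Now $\ker\phi$ is a prime ideal of the domain ${\mathcal O}_q(M_{m,n,\gamma}(\k))$, and for these iterated Ore extensions $\GKdim(A/P) < \GKdim A$ for every nonzero prime $P$; since $\GKdim\bigl({\mathcal O}_q(M_{m,n,\gamma}(\k))/\ker\phi\bigr) = \GKdim\sogamma = |\L_\gamma| = \GKdim {\mathcal O}_q(M_{m,n,\gamma}(\k))$, the kernel must vanish and $\phi$ is an isomorphism. (Alternatively, injectivity follows by checking that the images of a PBW basis of the ladder algebra remain linearly independent in $\sogamma$, using the standard monomial basis of $\sgamma$.)
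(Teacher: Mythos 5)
Your proposal follows essentially the same route as the paper: the map on generators $x_{ij}\mapsto\widetilde{m_{ij}}$, surjectivity from the generating set, and injectivity via a Gelfand--Kirillov dimension comparison (the paper invokes the comparison ``as in \cite[Theorem 3.1.6]{lr3}''). The relation-checking step that you flag as the main obstacle is in fact already available in full: \cite[Lemma 3.1.4]{lr3} shows the $m_{ij}$ satisfy exactly the quantum matrix relations of the ladder algebra, and since $\gamma m_{ij}=qm_{ij}\gamma$ uniformly (part (v)), the degree-two homogeneous relations pass to the $\widetilde{m_{ij}}=\wbar{m_{ij}}\,\wbar{\gamma}^{-1}$ with no extra bookkeeping.
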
 


\begin{proof}
Lemma 3.1.4 of \cite{lr3} shows that the commutation relations for the
$m_{ij}$ are the same as the commutation 
relations for corresponding variables
$x_{ij}$ in the quantum ladder matrix algebra
${\mathcal O}_q(M_{m,n,\gamma}(\k))$.
As $\gamma m_{ij} = qm_{ij}\gamma$, for each $i, j$, by 
\cite[Lemma 3.1.4(v)]{lr3}, it follows that the 
$\widetilde{m_{ij}}$ satisfy the same relations. 
Thus there is an epimorphism from ${\mathcal O}_q(M_{m,n,\gamma}(\k))$
onto $\sogamma$.
A comparison of Gelfand-Kirillov dimensions similar to that used in
\cite[Theorem 3.1.6]{lr3} now shows that this epimorphism is in fact an
isomorphism.
\qed\end{proof}


\begin{theorem}\label{theorem-cell-partition} 
The quantum Schubert cell $\sogamma$ is (isomorphic to) a partition subalgebra
of ${\mathcal O}_{q^{-1}}(M_{m,n-m}(\k))$.
\end{theorem}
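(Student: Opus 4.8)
The plan is to identify the ladder of $\gamma$ with a Young diagram via the $180^\circ$ rotation already noted after the Example, and then to show that the resulting subalgebra of quantum matrices is a partition subalgebra over the parameter $q^{-1}$. By the previous lemma, $\sogamma \cong {\mathcal O}_q(M_{m,n,\gamma}(\k))$, so it suffices to prove that ${\mathcal O}_q(M_{m,n,\gamma}(\k))$ is isomorphic to a partition subalgebra of ${\mathcal O}_{q^{-1}}(M_{m,n-m}(\k))$. First I would make the combinatorial bookkeeping explicit. The ladder ${\mathcal L}_\gamma$ consists of those $(i,j)$ with $j > \gamma_{m+1-i}$ and $j \notin \{\gamma_1,\dots,\gamma_m\}$; in row $i$ the number of admissible columns is $n - \gamma_{m+1-i} - (\text{number of }\gamma_\ell > \gamma_{m+1-i})$. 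Setting $\lambda_i := |\{ j : (i,j) \in {\mathcal L}_\gamma\}|$ one checks that $\lambda_1 \ge \lambda_2 \ge \dots \ge \lambda_m \ge 0$ and $\lambda_1 \le n-m$, so $\lambda = (\lambda_1,\dots,\lambda_m)$ is a genuine partition fitting in an $m \times (n-m)$ rectangle. The key point is that, after deleting the forbidden columns and re-indexing the surviving columns of each row consecutively from the right end (this is the rotation), the generators $x_{ij}$ of ${\mathcal O}_q(M_{m,n,\gamma}(\k))$ occupy exactly the boxes of $Y_\lambda$.

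Next I would track the commutation relations under this re-indexing. The defining relations of ${\mathcal O}_q(M_{m,n}(\k))$ are governed purely by the relative horizontal and vertical positions of the entries, not by their absolute column labels: two variables in the same row with the smaller-column one on the left $q$-commute, two in the same column with the upper one on top $q$-commute, and an NW/SE pair obeys the four-term relation while a NE/SW pair commutes. The rotation by $180^\circ$ reverses both the row order and the column order simultaneously. Under a simultaneous reversal of both coordinates, a pair that was in ``$q$-commutation'' relative position becomes one whose roles are swapped, so each relation $ab = qba$ becomes $ab = q^{-1}ba$; this is precisely the passage from parameter $q$ to $q^{-1}$. The main thing to verify carefully is that the four-term relation $x_{ij}x_{kl}-x_{kl}x_{ij}=(q-q^{-1})x_{il}x_{kj}$ transforms correctly: under $(q-q^{-1}) \mapsto (q^{-1}-q) = -(q-q^{-1})$ together with the swap of the two sides, one recovers exactly the defining four-term relation of ${\mathcal O}_{q^{-1}}(M_{m,n-m}(\k))$ restricted to the boxes of $Y_\lambda$. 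Thus the rotated generators satisfy the relations of the partition subalgebra $\ca_\lambda$ inside ${\mathcal O}_{q^{-1}}(M_{m,n-m}(\k))$.

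To finish I would assemble these observations into an explicit algebra isomorphism. Define a map from the partition subalgebra $\ca_\lambda \sse {\mathcal O}_{q^{-1}}(M_{m,n-m}(\k))$ to ${\mathcal O}_q(M_{m,n,\gamma}(\k)) \cong \sogamma$ sending the generator in box $(i,j)$ of $Y_\lambda$ to the corresponding $x_{i'j'}$ with $(i',j') \in {\mathcal L}_\gamma$ under the rotation. The relation-matching above shows this is a well-defined algebra homomorphism, and it is clearly surjective since it hits all the generators. Injectivity follows either from a Gelfand-Kirillov dimension comparison (both algebras are iterated Ore extensions with the same number of variables, hence have equal GK dimension $= |{\mathcal L}_\gamma| = |\lambda|$) or directly from the fact that both algebras admit PBW-type bases of the same size indexed by the boxes, and the map carries one basis to the other. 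The main obstacle, as usual, is purely notational: pinning down the re-indexing bijection ${\mathcal L}_\gamma \to Y_\lambda$ precisely enough that the sign and exponent changes in the four-term relation can be checked without ambiguity; once the bookkeeping is fixed, the verification that $q$ becomes $q^{-1}$ is a direct inspection of each of the four relation types.
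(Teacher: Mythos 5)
Your proposal is correct and takes essentially the same route as the paper: the paper also reduces via the previous lemma to the ladder algebra ${\mathcal O}_q(M_{m,n,\gamma}(\k))$ and then applies the $180^\circ$-rotation isomorphism $\delta\colon{\mathcal O}_q(M_n(\k))\to{\mathcal O}_{q^{-1}}(M_n(\k))$, $x_{ij}\mapsto x_{n+1-i,n+1-j}$, except that it simply cites this isomorphism from \cite[Corollary 5.9]{gl} instead of re-verifying the four relation types by hand as you do (and restricting the global isomorphism would also give you injectivity for free, making the GK-dimension argument unnecessary). One purely notational slip: with your definition $\lambda_i=|\{j:(i,j)\in{\mathcal L}_\gamma\}|$ the row lengths are weakly \emph{increasing} in $i$, and they form a partition only after the rotation reverses the row order --- harmless, since your construction does use the rotated indexing.
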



\begin{proof} 
For any $n$, there is an isomorphism $\delta:{\mathcal O}_{q}(M_n(\k))
\goesto{\mathcal O}_{q^{-1}}(M_n(\k))$ defined by $\delta(x_{ij}) =
x_{n+1-i,n+1-j}$, see the proof of \cite[Corollary 5.9]{gl}. The isomorphism
$\delta$ can be used to convert quantum ladder matrix algebras into
partition subalgebras. As Schubert cells are isomorphic to quantum 
ladder matrix algebras, the result follows. 
\qed\end{proof}\\


The isomorphism described in the previous result carries over the $\ch$-action
on $\sogamma$ to the partition subalgebra, and this induced action acts via
row and column multiplications. After suitable re-numbering of the components
of $\ch$, this action coincides with the action discussed at the beginning of
Section~\ref{section-partitions}. As a consequence of
Theorem~\ref{theorem-cell-partition}, the results obtained in
Section~\ref{section-partitions} apply to quantum Schubert cells. In
particular, the following results hold. 


\begin{theorem}\label{theorem-hspecsogamma-bijection}
Let $\lambda = (\lambda_1, \lambda_2, \dots, \lambda_m)$ be the partition with
$n\geq \lambda_1 \geq \lambda_2\geq \dots \geq \lambda_m \geq 0$ defined by
$\lambda_i + \gamma_i = n-m+i$ and let $Y_\lambda$ be the corresponding Young
diagram. Then the $\ch$-prime spectrum of $\sogamma$ is in bijection with the
set of Cauchon diagrams on the Young diagram, $Y_\lambda$, as described in
Theorem~\ref{theopartitionalgebra}.
\end{theorem}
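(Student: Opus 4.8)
The plan is to reduce this statement entirely to Theorem~\ref{theopartitionalgebra}, which already establishes the bijection between the $\ch$-prime spectrum of a partition subalgebra and the set of Cauchon diagrams on the associated Young diagram. By Theorem~\ref{theorem-cell-partition}, the quantum Schubert cell $\sogamma$ is isomorphic to a partition subalgebra of ${\mathcal O}_{q^{-1}}(M_{m,n-m}(\k))$, and the remark immediately following that theorem asserts that this isomorphism carries the $\ch$-action on $\sogamma$ over to the standard row-and-column action on the partition subalgebra. So the essential content of the present theorem is \emph{bookkeeping}: one must verify that the partition subalgebra occurring in Theorem~\ref{theorem-cell-partition} is precisely $\ca_\lambda$ for the partition $\lambda$ defined by $\lambda_i + \gamma_i = n - m + i$, and then invoke Theorem~\ref{theopartitionalgebra}.

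First I would make the identification of the partition explicit. The Schubert cell is isomorphic to the quantum ladder matrix algebra ${\mathcal O}_q(M_{m,n,\gamma}(\k))$, whose generators $x_{ij}$ sit in the ladder $\cl_\gamma = \{(i,j) : j > \gamma_{m+1-i} \text{ and } j \neq \gamma_\ell\}$. As illustrated in the example preceding Theorem~\ref{theorem-cell-partition}, rotating the ambient matrix through $180^\circ$ places these variables inside a genuine Young diagram; the isomorphism $\delta(x_{ij}) = x_{n+1-i,n+1-j}$ of Theorem~\ref{theorem-cell-partition} implements exactly this rotation. I would count, for each row $i$ of the rotated picture, the number of available columns to obtain the row length $\lambda_i$. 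In row $m+1-i$ of the original ladder the permitted columns are those $j > \gamma_{m+1-i}$ with $j$ not among the $\gamma_\ell$; a direct count of these positions, after the reflection, yields $\lambda_i = n - m + i - \gamma_i$, which is exactly the relation $\lambda_i + \gamma_i = n-m+i$ stated in the theorem. One checks from $\gamma_1 < \dots < \gamma_m$ that the resulting $\lambda_i$ satisfy $n \geq \lambda_1 \geq \dots \geq \lambda_m \geq 0$, so that $\lambda$ is a legitimate partition.

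Once the partition $\lambda$ is pinned down, I would conclude as follows. By Theorem~\ref{theorem-cell-partition} together with the combinatorial count above, $\sogamma \cong \ca_\lambda$ as $\k$-algebras, and by the remark following that theorem this isomorphism intertwines the $\ch$-action on $\sogamma$ described in Lemma~\ref{lemma-action-on-mijtilde} with the standard torus action on $\ca_\lambda$ used in Section~\ref{section-partitions} (after the indicated re-numbering of the components of $\ch$). Consequently the isomorphism induces a bijection $\hspec(\sogamma) \to \hspec(\ca_\lambda)$. Applying Theorem~\ref{theopartitionalgebra} to $\ca_\lambda$ gives $\hspec(\ca_\lambda) = \{J_C \mid C \in \mathcal{C}_\lambda\}$, a bijection with the set of Cauchon diagrams on $Y_\lambda$, and composing the two bijections yields the desired bijection between $\hspec(\sogamma)$ and $\mathcal{C}_\lambda$.

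The only genuinely nontrivial step is the combinatorial verification that the row lengths of the rotated ladder are given by $\lambda_i = n - m + i - \gamma_i$; everything else is a transfer along an already-constructed isomorphism. I would present that count carefully, since getting the indexing and the effect of the $180^\circ$ rotation $\delta$ correct—in particular matching row $i$ of $\lambda$ with row $m+1-i$ of the ladder and accounting for the deleted columns $\{\gamma_1,\dots,\gamma_m\}$—is where an off-by-one error is most likely to creep in. The verification that the re-numbered $\ch$-action matches the standard one is routine given the explicit eigenvalue formula $\alpha^{-1}_{\gamma_{m+1-i}}\alpha_j$ of Lemma~\ref{lemma-action-on-mijtilde}, and I would treat it briefly.
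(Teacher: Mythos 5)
Your proposal is correct and follows exactly the route the paper takes (indeed the paper states this theorem without proof, treating it as an immediate consequence of Theorem~\ref{theorem-cell-partition}, the remark on the transferred $\ch$-action, and Theorem~\ref{theopartitionalgebra}); your explicit row-length count $\lambda_i = n-m+i-\gamma_i$ for the rotated ladder is accurate and fills in the one detail the paper leaves implicit. The only point worth noting is that the partition subalgebra lives in ${\mathcal O}_{q^{-1}}(M_{m,n-m}(\k))$, which is harmless since $q^{-1}$ is likewise not a root of unity.
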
 


\begin{theorem}\label{theorem-schubert-cell-normal}
The quantum Schubert cell $\sogamma$ has $\ch$-normal separation, normal
$\ch$-separation and normal separation.
\end{theorem}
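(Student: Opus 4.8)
The plan is to prove Theorem~\ref{theorem-schubert-cell-normal} by pure transfer: Theorem~\ref{theorem-cell-partition} establishes that $\sogamma$ is isomorphic to a partition subalgebra of ${\mathcal O}_{q^{-1}}(M_{m,n-m}(\k))$, and the three separation properties have already been proved for arbitrary partition subalgebras of generic quantum matrices earlier in Section~\ref{section-partitions}. So essentially the entire content of the proof has been done; what remains is to say why the earlier results apply verbatim to the algebra ${\mathcal O}_{q^{-1}}(M_{m,n-m}(\k))$ rather than to ${\mathcal O}_{q}(M_{m,n-m}(\k))$, and why the $\ch$-action matches up.

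First I would recall that the passage immediately preceding the statement (the paragraph after Theorem~\ref{theorem-cell-partition}) observes that the isomorphism of Theorem~\ref{theorem-cell-partition} carries the $\ch$-action on $\sogamma$ over to the induced row-and-column action on the partition subalgebra, and that after relabelling the components of $\ch$ this is exactly the torus action studied at the start of Section~\ref{section-partitions}. Hence $\sogamma$ is $\ch$-isomorphic to a partition subalgebra $\ca_\lambda$ equipped with its standard induced $\ch$-action. All three separation properties are isomorphism invariants that respect the torus action, so it suffices to invoke the results already established for $\ca_\lambda$: $\ch$-normal separation is the content of the Theorem proving that partition subalgebras have $\ch$-normal separation, and the Corollary immediately following it then yields normal $\ch$-separation and normal separation.

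The one genuine point to check is that the base field parameter is $q^{-1}$ rather than $q$. This is harmless: since $q$ is a nonzero element of $\k$ that is not a root of unity, the same is true of $q^{-1}$, so $\emph{every}$ hypothesis used in Section~\ref{section-partitions} (the CGL-extension conditions, complete primeness of $\ch$-primes, the Cauchon-extension structure) holds equally well with $q$ replaced by $q^{-1}$. In other words, the results of Section~\ref{section-partitions} are stated for ``generic quantum matrices'' with an unspecified non-root-of-unity parameter, and ${\mathcal O}_{q^{-1}}(M_{m,n-m}(\k))$ is exactly such an algebra. I would simply note this and then quote the relevant Theorem and Corollary of Section~\ref{section-partitions}.

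I do not expect any real obstacle here: the hard work lies entirely in the earlier sections (establishing that Cauchon extensions preserve $\ch$-normal separation, and that the rectangular quantum matrix algebra itself has $\ch$-normal separation via \cite[Th\'eor\`eme 6.3.1]{c2}), together with the structural identification in Theorem~\ref{theorem-cell-partition}. The only thing that could trip one up is sloppiness about whether the isomorphism is $\ch$-equivariant, but the preceding paragraph has already addressed that. Thus the proof is a one- or two-sentence appeal to Theorem~\ref{theorem-cell-partition}, the matching of the torus actions, and the separation results for partition subalgebras.
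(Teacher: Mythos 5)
Your proposal is correct and takes essentially the same route as the paper: the paper deduces Theorem~\ref{theorem-schubert-cell-normal} precisely as a consequence of Theorem~\ref{theorem-cell-partition} together with the separation results for partition subalgebras in Section~\ref{section-partitions}, with the $\ch$-equivariance of the identification handled by the paragraph preceding the theorem. Your explicit remark that $q^{-1}$ is again a non-root of unity, so the hypotheses of Section~\ref{section-partitions} apply verbatim, is a point the paper leaves implicit and is a welcome clarification.
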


\begin{corollary}
The quantum Schubert cell $\sogamma$ is catenary.
\end{corollary}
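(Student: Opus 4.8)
The plan is to deduce catenarity of the quantum Schubert cell directly from the structural results already assembled in the paper, rather than reproving anything from scratch. The key observation is that the final corollary is the Schubert-cell analogue of Corollary~\ref{partition-catenary}, which established catenarity for partition subalgebras. Since Theorem~\ref{theorem-cell-partition} tells us that $\sogamma$ is isomorphic to a partition subalgebra of ${\mathcal O}_{q^{-1}}(M_{m,n-m}(\k))$, the most economical route is simply to transport catenarity across that isomorphism. Catenarity is an intrinsic property of $\spec(A)$ (it concerns only the lengths of saturated chains of prime ideals), so it is manifestly invariant under algebra isomorphism; thus the result would follow immediately from Corollary~\ref{partition-catenary} applied to the partition subalgebra in question.

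First I would invoke Theorem~\ref{theorem-cell-partition} to identify $\sogamma$ with a partition subalgebra $\ca_\lambda$ of generic quantum matrices (over the parameter $q^{-1}$, which is also not a root of unity, so all the hypotheses of Section~\ref{section-partitions} remain in force). Note that $q^{-1}$ is not a root of unity precisely when $q$ is not, so there is no loss in passing to the mirror parameter. Second, I would apply Corollary~\ref{partition-catenary}, which asserts that every such partition subalgebra is catenary. Since catenarity is preserved under isomorphism, the conclusion for $\sogamma$ follows at once.

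Alternatively, and in closer parallel to the partition-subalgebra development, one could argue directly from the separation results just proved: Theorem~\ref{theorem-schubert-cell-normal} gives that $\sogamma$ has normal separation, and then \cite[Theorem 0.1]{yz} yields catenarity provided the associated graded ring is a noetherian connected graded $\k$-algebra with enough normal elements. This is the route taken for partition subalgebras, and since $\sogamma$ \emph{is} (isomorphic to) such an algebra, the hypotheses of \cite[Theorem 0.1]{yz} transfer verbatim. Either phrasing gives a one-line proof.

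I do not anticipate a genuine obstacle here, since all the heavy lifting has already been done in Section~\ref{section-partitions} and the normal separation statement is in hand via Theorem~\ref{theorem-schubert-cell-normal}. The only point requiring a moment's care is to confirm that the isomorphism of Theorem~\ref{theorem-cell-partition} lands us in a partition subalgebra to which Corollary~\ref{partition-catenary} genuinely applies, i.e.\ that the change of parameter from $q$ to $q^{-1}$ does not disturb the standing hypothesis that the deformation parameter avoids roots of unity. Since this is immediate, the proof reduces to citing the transfer of catenarity across the isomorphism.
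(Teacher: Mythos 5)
Your proof is correct and coincides with the paper's own (implicit) argument: the corollary is stated there as an immediate consequence of Theorem~\ref{theorem-cell-partition} identifying $\sogamma$ with a partition subalgebra of ${\mathcal O}_{q^{-1}}(M_{m,n-m}(\k))$, combined with Corollary~\ref{partition-catenary}, which is exactly the transfer-across-isomorphism you describe. Your only point of care --- that $q^{-1}$ is a non-root of unity whenever $q$ is, so the hypotheses of Section~\ref{section-partitions} still apply --- is indeed the one thing worth checking, and you handle it correctly.
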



\section{The prime spectrum of the quantum grassmannian} 


In this section, we use the quantum Schubert cells to obtain information
concerning the prime spectrum of the quantum grassmannian. We show that, in
the generic case, where $q$ is not a root of unity, all primes are completely
prime and that there are only finitely many primes that are invariant under
the natural torus action on the quantum grassmannian. By using a result of
Lauren Williams, we are able to count the number of $\ch$-primes. Also, we are
able to show that the quantum grassmannian is catenary.\\


Note that the following result is valid for any $q\neq 0$. 


\begin{theorem}
Let $P$ be a prime ideal of $\oqgmn$ with $P\neq \left<\Pi\right>$; that is,
$P$ is not the irrelevant ideal. Then there is a unique $\gamma$ in $\Pi$ with
the property that $\gamma\not\in P$ but $\pi\in P$ for all
$\pi\not\geq_{\st}\gamma$.
\end{theorem}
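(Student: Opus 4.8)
The plan is to read the asserted property as the statement that $\gamma$ is the least element of
\[
S := \{\pi\in\Pi_{m,n}\mid \pi\notin P\}.
\]
Indeed, ``$\gamma\notin P$'' says $\gamma\in S$, while ``$\pi\in P$ for all $\pi\not\geq_\st\gamma$'' is the contrapositive of ``$\pi\in S\Rightarrow\pi\geq_\st\gamma$'', i.e. $\gamma$ is a lower bound for $S$. Uniqueness would then be automatic: two least elements of $S$ are each $\leq_\st$ the other, hence equal by antisymmetry. Since $\Pi_{m,n}$ is finite, $S$ has a least element as soon as it has a \emph{unique minimal element} (every element of a finite poset dominates some minimal element), so the whole theorem reduces to proving that $S$ has exactly one minimal element. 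First I note $S\neq\emptyset$: the minors generate $\oqgmn$ as a $\k$-algebra, so $\langle\Pi_{m,n}\rangle$ is the irrelevant ideal with $\oqgmn/\langle\Pi_{m,n}\rangle\cong\k$; were all minors in $P$ we would get $P\supseteq\langle\Pi_{m,n}\rangle$ and hence $P=\langle\Pi_{m,n}\rangle$ since $P$ is proper, contrary to hypothesis.

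Suppose for contradiction that $S$ has two distinct minimal elements $\gamma_1,\gamma_2$; being distinct and minimal they are incomparable for $\leq_\st$. Write $\overline{(\cdot)}$ for images in $\oqgmn/P$. By the straightening axiom (4), $\gamma_1\gamma_2$ is a linear combination of terms $\lambda$ or $\lambda\mu$ with $\lambda<_\st\gamma_1,\gamma_2$; each such $\lambda$ lies strictly below the minimal element $\gamma_1$ of $S$, hence $\lambda\notin S$, i.e. $\lambda\in P$. Every term is therefore in $P$, so $\gamma_1\gamma_2\in P$, that is $\overline{\gamma_1}\,\overline{\gamma_2}=0$. The main obstacle appears here: in a noncommutative prime ring this single relation does \emph{not} force one factor into $P$. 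To get around it I would show that a minimal element of $S$ is \emph{normal} modulo $P$. Applying axiom (5) to $\gamma_1$ and an arbitrary minor $\delta$, the element $\gamma_1\delta-c_{\gamma_1\delta}\,\delta\gamma_1$ is a combination of terms $\lambda$ or $\lambda\mu$ with $\lambda<_\st\gamma_1$, hence again lies in $P$; thus $\overline{\gamma_1}\,\overline{\delta}=c_{\gamma_1\delta}\,\overline{\delta}\,\overline{\gamma_1}$ with $c_{\gamma_1\delta}\in\k^\ast$. Since the $\overline{\delta}$ generate $\oqgmn/P$ and the scalars multiply across a monomial, one gets $\overline{\gamma_1}\,\overline{m}=c_m\,\overline{m}\,\overline{\gamma_1}$ for each monomial $m$ with $c_m\in\k^\ast$, and hence $\overline{\gamma_1}\,(\oqgmn/P)=(\oqgmn/P)\,\overline{\gamma_1}$, so $\overline{\gamma_1}$ is normal.

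Combining the two facts finishes the argument. For any $r\in\oqgmn$, normality gives $\overline{\gamma_1}\,\overline{r}=\overline{s}\,\overline{\gamma_1}$ for some $s$, whence $\overline{\gamma_1}\,\overline{r}\,\overline{\gamma_2}=\overline{s}\,\overline{\gamma_1}\,\overline{\gamma_2}=0$; that is, $\gamma_1\,\oqgmn\,\gamma_2\subseteq P$. Since $P$ is prime this forces $\gamma_1\in P$ or $\gamma_2\in P$, contradicting $\gamma_1,\gamma_2\in S$. Hence $S$ has a unique minimal element, which by finiteness is its least element $\gamma$; this $\gamma$ has the required property and uniqueness was already noted. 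The step I expect to be delicate is precisely the upgrade from $\gamma_1\gamma_2\in P$ to $\gamma_1\,\oqgmn\,\gamma_2\subseteq P$: this is where normality of the minimal minor modulo $P$ (extracted from axiom (5)) does the real work, everything else being formal lattice bookkeeping together with the definition of primeness.
\qed
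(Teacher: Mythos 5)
Your proof is correct and follows essentially the same route as the paper's: both hinge on showing that a minimal quantum minor $\gamma\notin P$ is normal modulo $P$ (you extract this from axiom (5) of the quantum graded A.S.L.\ structure, where the paper cites \cite[Lemma 1.2.1]{lr2}) and on using the straightening relation for an incomparable pair (your axiom (4), the paper's \cite[Theorem 3.3.8]{lr2}) to force the product into $P$, after which primeness finishes the job. Your repackaging of existence as uniqueness of the minimal element of $S=\{\pi\in\Pi\mid\pi\notin P\}$ is a cosmetic variant of the paper's direct verification that a minimal $\gamma\notin P$ satisfies $\pi\in P$ for all $\pi\not\geq_{\st}\gamma$.
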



\begin{proof} If $\Pi\sse P$ then $P$ is the irrelevant ideal. Otherwise, there
exists $\gamma\in\Pi$ with $\gamma\not\in P$. Choose such a $\gamma$ that is
minimal in $\Pi$ with this property. Then $\lambda \in P$ for all $\lambda
<_{\st}\gamma$. 

Note that $\left< \{\lambda \mid \lambda <_{\st}\gamma\}\right>
 \subseteq P$ and that $\gamma$ is normal modulo 
 $\left< \{\lambda \mid \lambda <_{\st}\gamma\}\right>$, by 
\cite[Lemma 1.2.1]{lr2}; so that
$\gamma$ is normal modulo $P$.

Suppose that $\pi\not\geq_{\st}\gamma$. If $\pi <_{\st}\gamma$ then $\pi\in
 P$ by the choice of $\gamma$. 
 If not, then $\pi$ and $\gamma$ are not comparable.
Thus, we can write
\[
\pi\gamma=\sum  k_{\lambda\mu}\lambda\mu 
\]
with
$  k_{\lambda\mu}\in\k$ while
$ \lambda,\mu\in \Pi$ with  $\lambda <_\st \gamma$, by 
\cite[Theorem 3.3.8]{lr2}.

It follows that $\pi\gamma \in P$. Thus, $\pi\in P$, since $\gamma\not\in P$
and $\gamma$ is normal modulo $P$.

This shows that there is a $\gamma$ with the required properties. It is easy
to check that there can only be one such $\gamma$.
\qed\end{proof} \\


This enables us to give a decomposition of the prime spectrum,
$\spec(\oqgmn)$. Set $\spec_\gamma(\oqgmn)$ to be the set of prime ideals $P$
such that $\gamma\not\in P$ while $\pi\in P$ for all 
$\pi \not\geq_{\st}\gamma$. The
previous result shows that 
\[ 
\spec(\oqgmn) =
\bigsqcup_{\gamma\in\Pi}\,\spec_\gamma(\oqgmn)\;\bigsqcup\;\left<\Pi\right>. 
\]


We now re-instate our convention that $q$ is not a root of unity. 


\begin{theorem} Let $q$ be a non root of unity. 
Then all prime ideals of the quantum grassmannian $\oqgmn$ are
completely prime.
\end{theorem}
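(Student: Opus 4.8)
The key structural input is the cell decomposition
\[
\spec(\oqgmn) = \bigsqcup_{\gamma\in\Pi}\,\spec_\gamma(\oqgmn)\;\bigsqcup\;\langle\Pi\rangle
\]
obtained just above. Every prime $P\neq\langle\Pi\rangle$ lies in exactly one stratum $\spec_\gamma(\oqgmn)$, characterised by the conditions $\gamma\notin P$ and $\pi\in P$ for all $\pi\not\geq_\st\gamma$. The irrelevant ideal $\langle\Pi\rangle$ is completely prime trivially, since the quotient is the ground field $\k$ (a domain). So I would immediately discard that case and concentrate on a prime $P$ sitting in some $\spec_\gamma(\oqgmn)$.

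\emph{First I would pass to the Schubert variety $S(\gamma)$.} Since $P$ contains every $\pi\not\geq_\st\gamma$, it contains the ideal $\langle\Pi_{m,n}^\gamma\rangle$, so $P$ corresponds to a prime $\overline{P}$ of the quotient $S(\gamma) = \oqgmn/\langle\Pi_{m,n}^\gamma\rangle$, and $P$ is completely prime iff $\overline{P}$ is. Moreover $\gamma\notin P$ translates into $\wbar\gamma\notin\overline{P}$. By Remark~\ref{remark-single-min-elt}, the image $\wbar\gamma$ is a homogeneous regular normal element of degree one in $S(\gamma)$, so I can localise and form $S(\gamma)[\wbar\gamma^{-1}]$. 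Because $\wbar\gamma\notin\overline{P}$, the prime $\overline{P}$ survives in the localisation, and localisation at a normal element preserves complete primeness (a quotient by a completely prime ideal is a domain both before and after inverting a nonzerodivisor normal element). Thus it suffices to show that every prime of $S(\gamma)[\wbar\gamma^{-1}]$ is completely prime.

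\emph{Next I would exploit the dehomogenisation isomorphism.} Writing $\sigma$ for the automorphism $s\mapsto\wbar\gamma\, s\,\wbar\gamma^{-1}$, the excerpt records an isomorphism $\theta:\sogamma[y,y^{-1};\sigma]\goesto S(\gamma)[\wbar\gamma^{-1}]$ that is the identity on the cell $\sogamma=\dhom(S(\gamma),\wbar\gamma)$ and sends $y\mapsto\wbar\gamma$. Hence $S(\gamma)[\wbar\gamma^{-1}]$ is a skew-Laurent extension of the quantum Schubert cell. By Theorem~\ref{theorem-cell-partition} the cell $\sogamma$ is a partition subalgebra of ${\mathcal O}_{q^{-1}}(M_{m,n-m}(\k))$, and by the Corollary following Proposition~\ref{prop-partition-cgl} every prime ideal of a partition subalgebra is completely prime when $q$ is not a root of unity. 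It therefore remains to lift complete primeness from $\sogamma$ to the skew-Laurent ring $\sogamma[y,y^{-1};\sigma]$, which is the crux of the argument.

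\emph{The hard part will be this last lifting step.} The standard mechanism is that for a skew-Laurent extension $R=A[y,y^{-1};\sigma]$ with $A$ a $\k$-algebra all of whose primes are completely prime, every prime of $R$ is completely prime; one argues by contracting a prime $Q$ of $R$ to $A$, noting that $Q\cap A$ is a $\sigma$-prime ideal whose quotient is a domain (since its minimal primes are completely prime and permuted by $\sigma$), and then the $\sigma$-action lets one control $R/Q$ as a localised skew extension of that domain. I would invoke the same $S$-primality/contraction machinery used throughout the partition-subalgebra arguments (the technology behind \cite[Theorem 2.3]{llr} and the CGL stratification in \cite{bg}) to conclude that $R/Q$ embeds in a skew-Laurent ring over a domain, hence is itself a domain. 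Combined with the reductions above, this shows $\overline P$, and therefore $P$, is completely prime, completing the proof. The main obstacle is verifying cleanly that complete primeness transfers across the skew-Laurent extension; everything else is routine transport along the established isomorphisms.
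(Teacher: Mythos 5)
Your chain of reductions is exactly the paper's: dispose of the irrelevant ideal, place $P$ in a stratum $\spec_\gamma(\oqgmn)$, pass to $\wbar{P}$ in $S(\gamma)$, localise at the regular normal element $\wbar{\gamma}$, and use the dehomogenisation isomorphism to reduce to the skew-Laurent ring $\sogamma[y,y^{-1};\sigma]$ over the cell. The genuine gap is in the step you yourself flag as the crux: the ``standard mechanism'' you invoke --- that if all primes of $A$ are completely prime then all primes of $A[y,y^{-1};\sigma]$ are completely prime --- is false in general. Take $A=\k[x^{\pm 1}]$ (commutative, so all primes completely prime) and $\sigma(x)=\epsilon x$ with $\epsilon$ a primitive $\ell$-th root of unity, $\ell\geq 2$, over an algebraically closed field: then $A[y,y^{-1};\sigma]$ is a quantum torus at a root of unity, Azumaya of PI degree $\ell$ over its centre $\k[x^{\pm\ell},y^{\pm\ell}]$, and the quotient by the extension of a suitable maximal ideal of the centre is $M_\ell(\k)$, so this ring has primes that are not completely prime. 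Your supporting claim also fails at the same point: for a prime $Q$ of $R$, the contraction $Q\cap A$ is indeed $\sigma$-prime, but its minimal primes are only \emph{permuted} by $\sigma$, so $A/(Q\cap A)$ is a finite subdirect product of domains, not a domain --- in the example above $Q\cap A$ contains $x^\ell-1$ and $A/\langle x^\ell-1\rangle\cong\k^\ell$.

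What makes the step legitimate here --- and this is the paper's route --- is that $\sigma$ is not an arbitrary automorphism: by Remark~\ref{remark-hypothesis2.1-ok} it coincides with the action of a torus element $h_0\in\ch$, with $h_0(y)=q^m y$ and $q$ not a root of unity, and the commutation scalars between $y$ and the generators $\widetilde{m_{ij}}$ are powers of $q$. Consequently the skew \emph{polynomial} ring $\sogamma[y;\sigma]$ is itself a torsionfree CGL extension (extending Proposition~\ref{prop-partition-cgl} and Theorem~\ref{theorem-cell-partition} by one variable), so \cite[Theorem II.6.9]{bg} applies directly to show that all its primes are completely prime; every prime of the Laurent ring $\sogamma[y,y^{-1};\sigma]$ is the localisation of a prime of $\sogamma[y;\sigma]$ avoiding $y$, hence is completely prime as well. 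So rather than proving (or citing) a general transfer lemma for skew-Laurent extensions --- which does not exist --- you should verify that the extended algebra inherits the torsionfree CGL structure; this is precisely where the hypothesis that $q$ is not a root of unity enters, and your proposed argument makes no use of it.
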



\begin{proof}
Let $P$ be a prime ideal of $\oqgmn$. If $P = \left<\Pi\right>$ then $\oqgmn/P
\cong \k$; so $P$ is completely prime.

Otherwise, suppose that $P\in\spec_\gamma(\oqgmn)$. In this case, $\wbar{P} =
P/ \langle \Pi_{m,n}^\gamma \rangle$ is a prime ideal in $S(\gamma) =
{\mathcal O}_q(G_{m,n}(\k))/\langle \Pi_{m,n}^\gamma \rangle$ and it is enough
to show that $\wbar{P}$ is completely prime. Set $T:=
\sgamma[\wbar{\gamma}^{-1}]$. Then $\wbar{P}T$ is a prime ideal of $T$ and
$\wbar{P}T \cap \sgamma = \wbar{P}$. Thus $\sgamma/\wbar{P}\sse T/ \wbar{P}T$ and so
it is enough to show that $\wbar{P}T$ is completely prime.

Now, the dehomogenisation isomorphism shows that $T\cong
\sogamma[y,y^{-1};\sigma]$, where $\sigma$ is the automorphism determined by
the conjugation action of $\wbar{\gamma}$, see the beginning of
Section~\ref{section-cells}.

We know that $\sogamma$ is a torsionfree CGL-extension by
Proposition~\ref{prop-partition-cgl} and Theorem~\ref{theorem-cell-partition}.
It is then easy to check that $\sogamma[y;\sigma]$ is also a torsionfree
CGL-extension. Thus, all prime ideals of $\sogamma[y;\sigma]$ are completely
prime, by \cite[Theorem II.6.9]{bg}, and it follows that all prime ideals of
$T\cong\sogamma[y,y^{-1};\sigma]$ are completely prime, as required.
\qed\end{proof}\\


Of course, the decomposition of $\spec(\oqgmn)$ above induces a similar
decomposition of $\hspec(\oqgmn)$:

\[
\hspec(\oqgmn) = 
\bigsqcup_{\gamma\in\Pi}\,\hspec_\gamma(\oqgmn)
\;\bigsqcup\;\left<\Pi\right>, 
\]
where $\hspec_\gamma(\oqgmn)$ is the set of $\ch$-prime ideals $P$ such that
$\gamma\not\in P$ while $\pi\in P$ for all $\pi \not\geq_{\st}\gamma$.

Our next task is to show that $\hspec_\gamma(\oqgmn)$ is in natural bijection
with $\hspec(\sogamma)$ and hence in bijection with Cauchon diagrams on the
associated Young diagram $Y_\lambda$. As a consequence, we are able to
calculate the size of $\hspec(\oqgmn)$.


\begin{remark}\label{remark-hypothesis2.1-ok}
{\rm 
Recall from the beginning of Section~\ref{section-cells}  
that, for any $\gamma\in\Pi_{m,n}$, 
there is the dehomogenisation isomorphism
\[
\theta :  \sogamma[y,y^{-1};\sigma] \longrightarrow 
S(\gamma)[\wbar{\gamma}^{-1}],
\]
where $\sigma$ is conjugation by $\wbar{\gamma}$.
Hence, the action of $\ch$ on $S(\gamma)[\wbar{\gamma}^{-1}]$ transfers, 
via $\theta$, to an action on
$\sogamma[y,y^{-1};\sigma]$. 
By Lemma~\ref{lemma-action-on-mijtilde}, $\sogamma$ is stable 
under this action and it is clear that $y$ is an $\ch$-eigenvector. 
Further, let $h_0=(\alpha_1,\dots,\alpha_n) \in \ch$ be such that 
$\alpha_i=q^2$ if  $i \notin \{\gamma_1,\dots,\gamma_m\}$ and $\alpha_i=q$ 
otherwise. Then, by using 
\cite[Lemma 3.1.4(v)]{lr3} and Lemma~\ref{lemma-action-on-mijtilde}, 
it is easily 
verified that the action of $h_0$ on $\sogamma$ coincides with $\sigma$. 
In addition, $h_0(y)=q^my$, since $h_0(\wbar{\gamma})=q^m\wbar{\gamma}$. 
It follows that
$\sogamma[y,y^{-1};\sigma]$ satisfies Hypothesis 2.1 in \cite{llr}.}
\end{remark}


\begin{theorem}
Let $P\in\hspec_\gamma(\oqgmn)$; so that $P$ is an $\ch$-prime ideal of
$\oqgmn$ such that $\gamma\not\in P$, while $\pi\in P$ for all $\pi
\not\geq_{\st}\gamma$. Set $T=
\sgamma[\wbar{\gamma}^{-1}]\cong\sogamma[y,y^{-1};\sigma]$. Then the
assignment $P\mapsto\wbar{P}T\cap\sogamma$ defines an inclusion-preserving
bijection from $\hspec_\gamma(\oqgmn)$ to $\hspec(\sogamma)$, with inverse
obtained by sending $Q$ to the inverse image in $\oqgmn$ of $QT\cap\sgamma$.
(Note, we are treating the isomorphism above as an id
entification in these
assignments.)
\end{theorem}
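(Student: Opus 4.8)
The plan is to establish the bijection by tracking prime ideals through a chain of standard correspondences, handling the passage to the quotient, the localisation, and the skew-Laurent structure one stage at a time. First I would reduce to the quotient ring $\sgamma$: since every $P\in\hspec_\gamma(\oqgmn)$ contains $\langle\Pi_{m,n}^\gamma\rangle$ (because each $\pi\not\geq_\st\gamma$ lies in $P$, and these generate the defining ideal of $\sgamma$), passing to $\wbar P = P/\langle\Pi_{m,n}^\gamma\rangle$ gives an inclusion-preserving bijection between $\hspec_\gamma(\oqgmn)$ and the set of $\ch$-primes $\wbar P$ of $\sgamma$ with $\wbar\gamma\notin\wbar P$. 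The condition $\gamma\notin P$ becomes $\wbar\gamma\notin\wbar P$, which is exactly the condition that $\wbar P$ survives localisation at the normal element $\wbar\gamma$.

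Next I would handle the localisation at the Ore set of powers of $\wbar\gamma$. Since $\wbar\gamma$ is a homogeneous regular normal element (Remark~\ref{remark-single-min-elt}) and an $\ch$-eigenvector, localisation at $\{\wbar\gamma^k\}$ gives a standard inclusion-preserving, $\ch$-equivariant bijection between the $\ch$-primes of $\sgamma$ not containing $\wbar\gamma$ and all $\ch$-primes of $T=\sgamma[\wbar\gamma^{-1}]$, via $\wbar P\mapsto\wbar P T$ with contraction $\wbar P T\cap\sgamma=\wbar P$ as the inverse. This is where the hypothesis $\gamma\notin P$ is consumed. The final stage uses the dehomogenisation isomorphism $T\cong\sogamma[y,y^{-1};\sigma]$ of Remark~\ref{remark-hypothesis2.1-ok}, together with the contraction map for the skew-Laurent extension: I would appeal to \cite[Theorem 2.3]{llr}, which under the CGL/Hypothesis~2.1 setup verified in Remark~\ref{remark-hypothesis2.1-ok} gives an inclusion-preserving bijection between $\hspec(\sogamma[y,y^{-1};\sigma])$ and $\hspec(\sogamma)$ by contracting $Q'\mapsto Q'\cap\sogamma$. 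Composing the three correspondences yields $P\mapsto\wbar P T\cap\sogamma$, with the stated inverse sending $Q$ back through $QT\cap\sgamma$ and then lifting to $\oqgmn$.

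The three intermediate maps are each inclusion-preserving and mutually inverse in the indicated direction, so composing them preserves inclusions and produces the claimed bijection; I would verify compatibility of the $\ch$-actions at each stage (the localisation action, and the transfer through $\theta$) so that ``$\ch$-prime'' is respected throughout, invoking Remark~\ref{remark-hypothesis2.1-ok} for the skew-Laurent step. I expect the main obstacle to be the careful bookkeeping at the skew-Laurent stage: one must confirm that the abstract contraction bijection of \cite[Theorem 2.3]{llr} truly lands an $\ch$-prime of the full extension onto an $\ch$-prime of the zero-degree subalgebra $\sogamma$ and not merely onto a prime, and that this matches the concrete description $\wbar P T\cap\sogamma$ under the identification $T\cong\sogamma[y,y^{-1};\sigma]$. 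Once the $\ch$-equivariance of $\theta$ (established in Remark~\ref{remark-hypothesis2.1-ok}, where $\sigma$ is realised as the action of $h_0\in\ch$) is in hand, this reduces to the already-cited result, and the remaining checks are routine.
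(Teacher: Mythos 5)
Your proposal is correct and follows essentially the same route as the paper: the paper likewise composes the localisation bijection $\wbar{P}\mapsto\wbar{P}T$ (with contraction $\wbar{P}T\cap S(\gamma)=\wbar{P}$) between $\hspec_\gamma(\oqgmn)$ and $\hspec(T)$ with the contraction bijection of \cite[Theorem 2.3]{llr} for $T\cong\sogamma[y,y^{-1};\sigma]$, justified exactly as you do by Remark~\ref{remark-hypothesis2.1-ok} realising $\sigma$ as the action of an element of $\ch$. Your explicit spelling-out of the quotient step and of the $\ch$-equivariance bookkeeping is a slightly more detailed version of what the paper leaves implicit.
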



\begin{proof} 
This follows from the conjunction of two bijections. First, standard
localisation theory shows that $ \wbar{P} = \wbar{P}T\cap S(\gamma)$; and this
gives a bijection between $\hspec_\gamma(\oqgmn)$ and $\hspec(T)$. For the
second bijection, note that $T\cong\sogamma[y,y^{-1};\sigma]$ and that the
automorphism $\sigma$ is given by the action of an element of $\ch$, see
Remark~\ref{remark-hypothesis2.1-ok}. Thus, it follows from \cite[Theorem
2.3]{llr} that there is a bijection between $\hspec(T)$ and $\hspec(\sogamma)$
given by intersecting an $\ch$-prime of $T$ with $\sogamma$. The composition
of these two bijections produces the required bijection. 
\qed\end{proof}\\


\begin{corollary}
$\hspec_\gamma(\oqgmn)$ is in bijection with the Cauchon diagrams on
$Y_\lambda$, where $\lambda$ is the partition associated with $\gamma$.
\end{corollary}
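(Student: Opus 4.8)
The plan is to obtain this corollary as an immediate composition of two bijections already established in the paper, so the proof should be a one-liner. First I would invoke the theorem immediately preceding this corollary, which provides an inclusion-preserving bijection
\[
\hspec_\gamma(\oqgmn) \longrightarrow \hspec(\sogamma),
\qquad P \longmapsto \wbar{P}T \cap \sogamma,
\]
where $T = \sgamma[\wbar{\gamma}^{-1}] \cong \sogamma[y,y^{-1};\sigma]$. This already reduces the description of the $\ch$-primes lying in the stratum indexed by $\gamma$ to the description of the $\ch$-primes of the single quantum Schubert cell $\sogamma$.

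Then I would apply Theorem~\ref{theorem-hspecsogamma-bijection}, which identifies $\hspec(\sogamma)$ with the set of Cauchon diagrams on the Young diagram $Y_\lambda$, where $\lambda$ is the partition determined by $\lambda_i + \gamma_i = n-m+i$ (this being the partition associated with $\gamma$). Composing the two maps yields the desired bijection between $\hspec_\gamma(\oqgmn)$ and the Cauchon diagrams on $Y_\lambda$, and a composition of bijections is a bijection, so nothing further is required.

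There is essentially no obstacle here: both constituent bijections have been proved, the partition $\lambda$ attached to $\gamma$ has been fixed, and the two maps are manifestly composable because the intermediate object in each case is the \emph{same} copy of $\hspec(\sogamma)$ (namely the Schubert cell realized, via Theorem~\ref{theorem-cell-partition}, as a partition subalgebra). The only point worth recording, if one wishes to track the explicit correspondence, is that under this composition a prime $P\in\hspec_\gamma(\oqgmn)$ corresponds to the Cauchon diagram whose black boxes encode the generators of the $\ch$-prime $\wbar{P}T\cap\sogamma$ after transport through the canonical embedding $\varphi$ of Theorem~\ref{theopartitionalgebra}; but for the bare statement about cardinalities and bijectivity this bookkeeping is unnecessary.
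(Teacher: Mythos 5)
Your proposal is correct and follows exactly the paper's own argument: the paper's proof of this corollary simply cites the preceding theorem (the bijection $\hspec_\gamma(\oqgmn)\to\hspec(\sogamma)$, $P\mapsto\wbar{P}T\cap\sogamma$) together with Theorem~\ref{theorem-hspecsogamma-bijection}, and composes the two bijections just as you do.
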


\begin{proof} 
This follows from the previous theorem and 
Theorem~\ref{theorem-hspecsogamma-bijection}.
\qed\end{proof}\\


It follows from this corollary and the partition of the $\ch$-spectrum of the
quantum grassmannian that the $\ch$-spectrum of the quantum grassmannian is
finite. This finiteness is a crucial condition needed to investigate normal
separation, Dixmier-Moeglin equivalence, etc. in the quantum case because of
the stratification theory, see, for example, \cite[Theorem 5.3]{good},
\cite[Theorem II.8.4 ]{bg}. However, in this situation, we can say much more:
we can say exactly how many $\ch$-primes there are in the quantum grassmannian
$\oqgmn$. This is one more (the irrelevant ideal $\langle\Pi\rangle$) than the
total number of Cauchon diagrams on the Young diagrams $Y_\lambda$
corresponding to the partitions $\lambda$ that fit into the partition
$(n-m)^m$. This combinatorial problem has been solved by Lauren Williams, in
\cite{w}. The following result is obtained by setting $q=1$ in the formula for
$A_{k,n}(q)$ in \cite[Theorem~4.1]{w}. 


\begin{theorem}
\[
\left|\hspec(\oqgmn)\right| = 1 + 
 \sum_{i=0}^{m-1} {n\choose i} \left( (i-m)^i(m-i+1)^{n-i} 
- (i-m+1)^i (m-i)^{n-i}\right)
\]
\end{theorem}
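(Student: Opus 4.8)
The plan is to count the $\ch$-primes of $\oqgmn$ by combining the cell decomposition established in the previous corollaries with Williams' enumeration of Cauchon (Le-)diagrams. The decomposition
\[
\hspec(\oqgmn) =
\bigsqcup_{\gamma\in\Pi}\,\hspec_\gamma(\oqgmn)
\;\bigsqcup\;\left<\Pi\right>
\]
shows that $\left|\hspec(\oqgmn)\right|$ equals $1$ (for the irrelevant ideal $\langle\Pi\rangle$) plus $\sum_{\gamma\in\Pi}\left|\hspec_\gamma(\oqgmn)\right|$. By the corollary immediately preceding this theorem, each $\hspec_\gamma(\oqgmn)$ is in bijection with the set of Cauchon diagrams on the Young diagram $Y_\lambda$, where $\lambda$ is the partition associated with $\gamma$ via $\lambda_i + \gamma_i = n-m+i$ (see Theorem~\ref{theorem-hspecsogamma-bijection}). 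First I would observe that as $\gamma$ ranges over $\Pi = \Pi_{m,n}$, the associated partition $\lambda$ ranges bijectively over all partitions fitting inside the $m\times(n-m)$ rectangle $(n-m)^m$; this is because the correspondence $\gamma\mapsto\lambda$ is the standard bijection between $m$-element subsets of $\{1,\dots,n\}$ and such partitions. Hence the total sum $\sum_{\gamma\in\Pi}\left|\hspec_\gamma(\oqgmn)\right|$ is exactly the total number of Cauchon diagrams on all Young diagrams $Y_\lambda$ with $\lambda\subseteq (n-m)^m$.

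At this point the problem is purely combinatorial: I need the total number of Le-diagrams contained in the $m\times(n-m)$ rectangle. This is precisely the quantity computed by Williams. The plan is to invoke \cite[Theorem~4.1]{w}, which gives a closed formula for a generating polynomial $A_{k,n}(q)$ counting such diagrams refined by some statistic, and then specialise at $q=1$ to recover the plain cardinality. Matching Williams' parameters $(k,n)$ to our $(m,n)$ and substituting $q=1$ into her formula yields
\[
\sum_{i=0}^{m-1} {n\choose i} \left( (i-m)^i(m-i+1)^{n-i}
- (i-m+1)^i (m-i)^{n-i}\right),
\]
which is the stated count of all Cauchon diagrams in the rectangle. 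Adding the $+1$ for the irrelevant ideal gives the theorem.

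The main obstacle is not conceptual but one of careful bookkeeping: I must verify that the indexing conventions agree. Specifically, I need to confirm that Williams' parameter $k$ corresponds to our $m$ (the number of rows, equivalently the dimension of the subspaces), that her ambient interval of length $n$ matches ours, and that her statistic indeed specialises at $q=1$ to the raw count of Le-diagrams rather than to some weighted count. I would also double-check that the bijection $\gamma\mapsto\lambda$ genuinely covers \emph{every} partition in the rectangle exactly once, including the empty partition (corresponding to the maximal minor $\gamma=(n-m+1,\dots,n)$, whose cell is the open cell), so that no diagram is omitted or double-counted. Once these identifications are confirmed, the result is immediate from the two displayed ingredients, and no further computation is required beyond quoting Williams' formula.
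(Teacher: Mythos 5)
Your proposal is correct and follows exactly the paper's argument: the cell decomposition of $\hspec(\oqgmn)$, the bijection of each $\hspec_\gamma(\oqgmn)$ with Cauchon diagrams on $Y_\lambda$ as $\lambda$ ranges over partitions inside the $(n-m)^m$ rectangle, and the specialisation $q=1$ of Williams' formula for $A_{k,n}(q)$, plus one for the irrelevant ideal. Your extra care about matching Williams' indexing conventions and confirming that $\gamma\mapsto\lambda$ exhausts all partitions (including the empty one) is sound bookkeeping that the paper leaves implicit.
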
 


\begin{proof} 
By using the results above, we see that, except for the irrelevant ideal,
each $\ch$-prime corresponds to a unique Cauchon diagram drawn on the Young
diagram $Y_\lambda$ that corresponds to the partition $\lambda$ associated to
the quantum minor $\gamma$ which determines the cell that $P$ is in. 

In \cite[Theorem 4.1]{w}, Lauren Williams has counted the number of Cauchon
diagrams on the Young diagrams $Y_{\lambda}$ that fit into the partition
$(n-m)^m$; and this count, plus one, gives the number of $\ch$-prime ideals of
$\oqgmn$.
\qed \end{proof} \\


For example, $|\hspec(\coq(G_{2,4}))| = 34$ and $|\hspec(\coq(G_{3,6}))| =
884$. (These numbers can be seen from the table in \cite[Figure 23.1]{post}.)
\\


We turn now to the questions of normal separation and catenarity. In order to
establish these properties for the quantum grassmannian, we need to use the
dehomogenisation isomorphism. Recall that the methods of \cite{llr} are
available because of Remark~\ref{remark-hypothesis2.1-ok}.


\begin{lemma}
Let $Q\ssneq P$  be $\ch$-prime ideals in $S(\gamma)$ that do not contain 
$\wbar{\gamma}$. Then, there is an $\ch$-eigenvector in $P\backslash Q$ 
that is normal modulo $Q$.
\end{lemma}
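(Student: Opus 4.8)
The plan is to transfer the normal separation problem from $S(\gamma)$ to the quantum Schubert cell $\sogamma$, where we have already established normal $\ch$-separation in Theorem~\ref{theorem-schubert-cell-normal}. Since $Q \ssneq P$ are $\ch$-primes of $S(\gamma)$ neither containing $\wbar{\gamma}$, I would first localise at $\wbar{\gamma}$ to pass into $T = S(\gamma)[\wbar{\gamma}^{-1}] \cong \sogamma[y,y^{-1};\sigma]$. By standard localisation theory (as used in the preceding theorems), the ideals $QT$ and $PT$ are distinct $\ch$-primes of $T$ with $QT \cap S(\gamma) = Q$ and $PT \cap S(\gamma) = P$. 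Then, via the bijection of \cite[Theorem 2.3]{llr} (available because $\sigma$ is the action of an element of $\ch$, by Remark~\ref{remark-hypothesis2.1-ok}), I would contract to $\ch$-primes $Q_0 = QT \cap \sogamma \ssneq P_0 = PT \cap \sogamma$ of $\sogamma$.

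Next, applying normal $\ch$-separation for $\sogamma$, I would obtain an $\ch$-eigenvector $c \in P_0 \backslash Q_0$ whose image in $\sogamma/Q_0$ is normal. The goal is then to manufacture from $c$ an $\ch$-eigenvector in $P \backslash Q$ that is normal modulo $Q$ back in $S(\gamma)$. The natural candidate is $c$ itself, viewed as an element of $T = \sogamma[y,y^{-1};\sigma] \supseteq \sogamma$, and I would aim to show it is normal modulo $QT$ in $T$ and then descend to $S(\gamma)$. Normality modulo $QT$ in the skew-Laurent extension should follow because $c$ is normal modulo $Q_0$ in $\sogamma$ and, since $c$ is an $\ch$-eigenvector and $\sigma = \alpha$ coincides with the action of an element $h_0 \in \ch$, conjugation by $y$ sends $c$ to a scalar multiple of itself (hence $yc = \mu\, cy$ for some $\mu \in \k^*$). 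Combining the $\sogamma$-normality with this commutation of $c$ past $y^{\pm 1}$ yields that $c$ is $\ch$-normal modulo $QT$ throughout $T$.

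To finish, I would descend from $T$ back to $S(\gamma)$. Since $c \in \sogamma \subseteq T$ is normal modulo $QT$ and $c \in PT \backslash QT$, I want a genuine element of $P \backslash Q$ that is normal modulo $Q$. The point is that $c$ need not lie in $S(\gamma)$, so I would clear the $\wbar{\gamma}$-denominators: the element $\wbar{\gamma}^k c$ lies in $S(\gamma)$ for suitable $k \geq 0$, it is still an $\ch$-eigenvector (as $\wbar{\gamma}$ is an $\ch$-eigenvector), and since $\wbar{\gamma}$ is a normal nonzerodivisor in $S(\gamma)$ (Remark~\ref{remark-single-min-elt}) that is not in $Q$ (by hypothesis) nor in $P$, multiplication by $\wbar{\gamma}^k$ preserves membership: $\wbar{\gamma}^k c \in P \backslash Q$. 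Normality modulo $Q$ of $\wbar{\gamma}^k c$ follows from the normality of $c$ modulo $QT$ together with the normality of $\wbar{\gamma}$ modulo $Q$, upon intersecting back with $S(\gamma)$ and using $QT \cap S(\gamma) = Q$.

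I expect the main obstacle to be the descent step, namely verifying carefully that normality of the product $\wbar{\gamma}^k c$ modulo $Q$ genuinely holds in $S(\gamma)$ rather than merely modulo $QT$ in the localisation. The subtlety is that normality is a statement about a two-sided principal ideal, and one must check that conjugating $\wbar{\gamma}^k c$ past an \emph{arbitrary} element of $S(\gamma)$ (not just elements of $\sogamma$ or powers of $\wbar{\gamma}$) lands back in $\wbar{\gamma}^k c\, S(\gamma) + Q$; here the eigenvector property of both $\wbar{\gamma}$ and $c$ under the torus action, combined with the fact that $\sogamma$ and $\wbar{\gamma}$ together generate $T$, is what makes the argument go through, but the bookkeeping of scalars and the passage between $\ch$-normality in $T$ and normality modulo $Q$ in $S(\gamma)$ will require care.
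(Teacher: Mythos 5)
Your proposal is correct and follows essentially the same route as the paper's proof: localise at $\wbar{\gamma}$, contract to $\ch$-primes $Q_0\ssneq P_0$ of $\sogamma$ via \cite[Theorem 2.3]{llr}, apply the cell's normal $\ch$-separation to get an eigenvector $c$, commute $c$ past $y$ using the fact that $\sigma$ is the action of a torus element, and clear denominators with $\wbar{c}\,\wbar{\gamma}^{d}$; the descent step you flag is settled exactly by the mechanism you name, namely that conjugation by $\wbar{c}$ sends each generator $\wbar{[\alpha_1,\dots,\alpha_m]}=\bigl(\wbar{[\alpha_1,\dots,\alpha_m]}\,\wbar{\gamma}^{-1}\bigr)\wbar{\gamma}$ of $S(\gamma)/Q$ to a scalar multiple of itself, so that $S(\gamma)/Q$ is stable under conjugation by both $\wbar{c}$ and $\wbar{\gamma}$. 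One small caveat: your claim that $c$ is $\ch$-normal modulo $QT$ throughout $T$ (a single $h'\in\ch$ satisfying $ct-h'(t)c\in QT$ for all $t\in T$) is not justified, since the given $h$ need not act on $y$ by the correct scalar $\lambda^{-1}$ — but this is harmless, as plain normality of $\wbar{c}$ in $T/QT$ is all that is needed and is all the paper uses, the eigenvector property of the final element coming from $c$ and $\wbar{\gamma}$ being $\ch$-eigenvectors.
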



\begin{proof}
Let $Q\subsetneqq P$ be $\ch$-prime ideals in $S(\gamma)$
that do not contain $\wbar{\gamma}$. Set $T:= S(\gamma)
[\wbar{\gamma}^{-1}]$ and observe that there is an induced action of the torus
$\ch$ on $T$, because $\gamma$ is an $\ch$-eigenvector. Note that $P = PT\cap
S(\gamma)$ and $Q = QT\cap S(\gamma)$; so $QT\subsetneqq PT$ are $\ch$-prime
ideals in $T$. Now, set $P_0:=PT\cap\sogamma$ and $Q_0:=QT\cap\sogamma$ (here,
we are treating the isomorphism $T\cong \sogamma[y,y^{-1};\sigma]$ as an
identification) and note that 
$PT = \oplus_{n\in\mz} P_0\,y^n$ 
and 
$QT = \oplus_{n\in\mz} Q_0\,y^n$; 
so $Q_0\subsetneqq P_0$ are $\ch$-prime ideals of
$\sogamma$, see Remark~\ref{remark-hypothesis2.1-ok} 
and \cite[Theorem 2.3]{llr}. 
These observations make it clear that 

\[ \frac{\sogamma}{Q_0}[y,y^{-1};\sigma]\quad\cong
\quad\frac{T}{QT}\quad\cong\quad\frac{S(\gamma)}{Q}[\wbar{\gamma}^{-1}].
\]
As usual, $\wbar{\sogamma}$ will denote $\sogamma/Q_0$, etc.

The quantum Schubert cell $\sogamma$ has $\ch$-normal separation, by
Theorem~\ref{theorem-schubert-cell-normal}. Thus, there exists an
$\ch$-eigenvector $c\in P_0\backslash Q_0$ and an element $h\in\ch$ such that
$ca -h(a)c\in Q_0$ for all $a\in\sogamma$. Recall that the action of $\sigma$
coincides with the action of an element $h_y$ of $\ch$; so that $yc =h_y(c)y=
\lambda cy$ for some $\lambda\in\k^*$. It follows that $\wbar{c}$ is normal in
$T/QT$. Define $\sigmac:T/QT\goesto T/QT$ by $\wbar{c}\wbar{t}\ =
\sigmac(\wbar{t})\wbar{c}$ for all $t\in T$. Note that
$\sigmac|_{\wbar{\sogamma}} = h|_{\wbar{\sogamma}}$ and that $\sigmac(y) =
\lambda^{-1} y$. 

We claim that $\sigma_c(S(\gamma)/Q) = S(\gamma)/Q$;
so that $\sigmac$ induces  an
isomorphism on this algebra. In order to see this, note that $S(\gamma)/Q$ is
generated as an algebra by the images of the quantum minors $[\alpha_1,
\dots,\alpha_m]$ for $[\alpha_1, \dots,\alpha_m] \geq \gamma$.
Now, $\wbar{[\alpha_1,
\dots,\alpha_m]}\,\wbar{\gamma}^{-1}\in \wbar{\sogamma}$, 
because $[\alpha_1, \dots,\alpha_m]\gamma^{-1}$ has degree zero in $T$ 
so that $[\alpha_1, \dots,\alpha_m]\gamma^{-1}\in\sogamma$.
Thus, recalling that
$\wbar{\gamma}$ is identified with $y$ under the isomorphisms above, 
\begin{align*}
\sigmac(\wbar{[\alpha_1, \dots,\alpha_m]}) 
    &=
\sigmac(\wbar{[\alpha_1,\dots,\alpha_m]}\,\wbar{\gamma}^{-1}) 
\sigmac(\wbar{\gamma})
    =
h(\wbar{[\alpha_1, \dots,\alpha_m]}\,\wbar{\gamma}^{-1})(\lambda^{-1} y)\\
    &=
\mu\, \wbar{[\alpha_1, \dots,\alpha_m]}\,\wbar{\gamma}^{-1}(\lambda^{-1} y)
    =
(\mu\lambda^{-1})\wbar{[\alpha_1, \dots,\alpha_m]}\,\wbar{\gamma}^{-1}y\\ 
    &=
(\mu\lambda^{-1} )\wbar{[\alpha_1, \dots,\alpha_m]},    
\end{align*}
where the existence of $\mu\in\k^*$ is guaranteed because $h$ is acting as a
scalar on the element 
$\wbar{[\alpha_1, \dots,\alpha_m]}\,\wbar{\gamma}^{-1}\in\sogamma/Q_0$. 
The claim follows. 

There exists $d\geq 0$ such that $\wbar{c}\,\wbar{\gamma}^{d}\in
S(\gamma)/Q$. It is obvious that $c\wbar{\gamma}^{d}$ is an
$\ch$-eigenvector, because each of $c$ and $\gamma$ is an $\ch$-eigenvector.
Also, $c\wbar{\gamma}^{d} \in P\backslash Q$. Finally,
$\wbar{c}\,\wbar{\gamma}^{d}$ is normal in $S(\gamma)/Q$, because
$S(\gamma)/Q$ is invariant under conjugation by each of $\wbar{c}$ and
$\wbar{\gamma}$. 
\qed
\end{proof}


\begin{theorem}
The quantum grassmannian $\oqgmn$ has normal $\ch$-separation and hence normal
separation.
\end{theorem}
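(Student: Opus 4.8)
The plan is to establish normal $\ch$-separation for $\oqgmn$; normal separation then follows at once from \cite[Theorem 5.3]{good}. By the reformulation of normal $\ch$-separation noted in Section~\ref{section-partitions}, it suffices to produce, for each pair of $\ch$-primes $Q\ssneq P$, an element $c\in P\setminus Q$ whose image in $\oqgmn/Q$ is both normal and an $\ch$-eigenvector. I would organise the argument around the cell decomposition $\hspec(\oqgmn)=\bigsqcup_\gamma\hspec_\gamma(\oqgmn)\sqcup\left<\Pi\right>$. First note that the irrelevant ideal is maximal, since $\oqgmn/\left<\Pi\right>\cong\k$; hence $Q\neq\left<\Pi\right>$, so $Q$ lies in some stratum $\hspec_{\gamma_Q}(\oqgmn)$. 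Thus there is a unique minor $\gamma_Q$ with $\gamma_Q\notin Q$ and $\pi\in Q$ for all $\pi\not\geq_\st\gamma_Q$, and, exactly as in the first theorem of this section, $\gamma_Q$ is normal modulo $Q$ and is an $\ch$-eigenvector.

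I would then split into two cases according to whether $P$ lies in the same cell as $Q$. Suppose first that it does not, that is, either $P=\left<\Pi\right>$ or $\gamma_P\neq\gamma_Q$. I claim that $c=\gamma_Q$ then works. If $P=\left<\Pi\right>$ then $\gamma_Q\in\left<\Pi\right>=P$, since $\gamma_Q$ is a minor of positive degree. Otherwise, from $Q\sse P$ one deduces $\gamma_Q\leq_\st\gamma_P$: were $\gamma_P\not\geq_\st\gamma_Q$, the defining property of $\gamma_Q$ would force $\gamma_P\in Q\sse P$, contradicting $\gamma_P\notin P$. As $\gamma_Q\neq\gamma_P$ this inequality is strict, so $\gamma_Q\not\geq_\st\gamma_P$, whence $\gamma_Q\in P$ by the defining property of $\gamma_P$. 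In either situation $\gamma_Q\in P\setminus Q$ is a normal $\ch$-eigenvector modulo $Q$, as required.

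The remaining case is $\gamma_Q=\gamma_P=:\gamma$, so that $Q\ssneq P$ both lie in $\hspec_\gamma(\oqgmn)$, both contain $\langle\Pi_{m,n}^\gamma\rangle$, and neither contains $\gamma$. Passing to $S(\gamma)=\oqgmn/\langle\Pi_{m,n}^\gamma\rangle$, the images $\wbar Q\ssneq\wbar P$ are $\ch$-primes of $S(\gamma)$ not containing $\wbar\gamma$, so the preceding lemma supplies an $\ch$-eigenvector in $\wbar P\setminus\wbar Q$ normal modulo $\wbar Q$. Lifting it through the $\ch$-equivariant quotient map (a weight vector lifts to a weight vector of the same $\ch$-weight) yields $c\in P\setminus Q$ whose image in $\oqgmn/Q\cong S(\gamma)/\wbar Q$ is a normal $\ch$-eigenvector, which finishes this case and hence the proof of normal $\ch$-separation. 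The only genuinely delicate step is this same-cell case, but the substance there has already been handled by the preceding lemma, which transports the $\ch$-normal separation of the Schubert cell $\sogamma$ (Theorem~\ref{theorem-schubert-cell-normal}) across the dehomogenisation isomorphism; the across-cells cases collapse pleasantly to using the distinguished minor $\gamma_Q$ itself as the separating normal element.
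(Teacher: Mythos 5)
Your proposal is correct and follows essentially the same route as the paper: the paper also splits on whether $\gamma_Q\in P$ (your different-cell case, handled there simply by noting $\gamma$ is a normal $\ch$-eigenvector modulo $Q$) versus $\gamma_Q\notin P$, in which case $P$ lies in the same cell $\spec_\gamma$ and the preceding lemma on $S(\gamma)$ is invoked exactly as you do. Your detour through the partial order to show $\gamma_Q\in P$ and your explicit lifting remark are slightly more elaborate than needed, but both are sound.
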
 

\begin{proof} Suppose that $Q\ssneq P$ are $\ch$-prime ideals of $\oqgmn$. Suppose
that $Q\in\spec_\gamma(\oqgmn)$. If $\gamma\in P$, then $P$ contains the
$\ch$-eigenvector $\gamma$. 

Otherwise, $\gamma\not\in P$ and $P\in\spec_\gamma(\oqgmn)$. 
In this case, it is
enough to show that there is a $\ch$-eigenvector in
$\wbar{P}\backslash\wbar{Q}$ which is normal modulo $\wbar{Q}$, 
where $\wbar{P}= P/ \langle \Pi_{m,n}^\gamma
\rangle$ and $\wbar{Q} =Q/ \langle \Pi_{m,n}^\gamma \rangle$ 
are $\ch$-prime ideals in $S(\gamma)$. 
However, this has been done in the previous lemma.
\qed\end{proof}


\begin{theorem} The quantum grassmannian 
$\oqgmn$ is catenary.
\end{theorem}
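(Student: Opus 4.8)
The plan is to mimic the strategy used for partition subalgebras in Section~\ref{section-partitions}, namely to deduce catenarity from normal separation together with a suitable filtered/graded structure. By the previous theorem, $\oqgmn$ already has normal separation, so the main work is to verify the remaining hypotheses of \cite[Theorem 0.1]{yz}: that $\oqgmn$ is a noetherian filtered $\k$-algebra whose associated graded ring ${\rm gr}(\oqgmn)$ is a noetherian connected graded $\k$-algebra with enough normal elements (in the sense of \cite{z}). Once these are in place, catenarity follows immediately, exactly as in the proof of Corollary~\ref{partition-catenary}.

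First I would exhibit a filtration. The natural candidate is the filtration associated with the standard monomial basis: since $\oqgmn$ is a quantum graded A.S.L.\ on $(\Pi_{m,n},\le_\st)$ by Example following Definition~\ref{recall-q-gr-asl}, one can use the straightening relations (conditions (4) and (5) of Definition~\ref{recall-q-gr-asl}) to build a filtration whose associated graded algebra is a quasi-commutative (skew-polynomial-type) algebra on the generators $[I]$, with the incomparable products $\alpha\beta$ degenerating to lower terms. The point of passing to ${\rm gr}$ is that the ``noise'' in relations (4) and (5)---the lower-order standard monomials---is killed, leaving a clean quadratic/quasi-polynomial algebra. I would check that this ${\rm gr}$ is connected graded and noetherian (noetherianity being inherited since $\oqgmn$ is noetherian and the filtration is exhaustive with noetherian graded ring, or directly because the degenerate algebra is a skew polynomial ring on the poset).

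The heart of the matter is \emph{enough normal elements} for ${\rm gr}(\oqgmn)$. Here I would use Remark~\ref{remark-single-min-elt}: the minimal element of any A.S.L.\ poset gives a normal nonzerodivisor, and the quotient by it is again a quantum graded A.S.L.\ on the induced poset. This inductive structure on $\Pi_{m,n}$ is precisely what is needed to produce a chain of normal elements exhibiting ``enough normal elements'': at each stage the image of the minimal generator $\gamma$ of the current poset is normal modulo the previous ones, and the quotient retains the A.S.L.\ structure. Passing this through to ${\rm gr}$ is cleaner still, since in the degenerate algebra the normality of the minimal generators is exact rather than only ``up to lower terms''. I expect this verification of enough normal elements---tracking the poset-theoretic induction carefully through the graded degeneration---to be the main obstacle, since it requires knowing that the A.S.L.\ structure descends compatibly to ${\rm gr}$ and that the resulting normal chain has the right length/codimension behaviour.

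Finally, I would assemble these pieces and invoke \cite[Theorem 0.1]{yz}, concluding that $\spec(\oqgmn)$ is catenary. As in the partition-subalgebra case, I would also note that an alternative route is available via \cite[Theorem 1.6]{glcat}, which may allow one to bypass the explicit construction of the graded algebra with enough normal elements by instead feeding in normal separation and the Goodearl--Letzter stratification data directly; if the first approach proves technically delicate, this second citation provides a fallback.
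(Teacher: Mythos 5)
Your proposal is correct and takes essentially the same route as the paper, whose entire proof is the one-line deduction you state: normal separation (from the preceding theorem) plus \cite[Theorem 0.1]{yz}, exactly as in Corollary~\ref{partition-catenary}, with \cite[Theorem 1.6]{glcat} noted as the alternative. The only difference is that your filtration-and-degeneration machinery is heavier than needed: since $\oqgmn$ is already a noetherian connected $\N$-graded algebra, one may take the trivial filtration (so ${\rm gr}(\oqgmn)\cong\oqgmn$), and \emph{enough normal elements} follows directly from normal $\ch$-separation, because $\ch$-eigenvectors are homogeneous (the torus contains the scalars acting by degree), so every nonsimple graded prime factor contains a homogeneous normal element of positive degree.
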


\begin{proof} As in Corollary~\ref{partition-catenary}, 
this follows from the previous results and \cite[Theorem 0.1]{yz}.
\qed\end{proof}


\begin{remark}
{\rm It is obvious from the style of proof of the preceding results that there
is now a good strategy for producing results concerning the quantum
grassmannian: first, establish the corresponding results for partition
subalgebras of quantum matrices, and then use the theory of quantum Schubert
cells and noncommutative dehomogenisation to obtain the result in the quantum
grassmannian. We leave any further developments for interested readers. }

\end{remark} 



\section{Concluding remark.}


We end  this work by stressing some important connections between the
results established in Section 4 above, and recent work of Postnikov in
total positivity, see \cite{post}. \\

Let $M_{m,n}^+(\mr)$ denote the space of $m \times n$ real matrices of rank
$m$ and whose $m \times m$ minors are nonnegative. The group $GL_m^+(\mr)$ of
$m \times m$ real matrices of positive determinant act naturally on
$M_{m,n}^+(\mr)$ by left multiplication. The corresponding quotient space
$G_{m,n}^+(\mr)=M_{m,n}^+(\mr)/GL_m^+(\mr)$ is the {\em totally nonnegative}
grassmannian of $m$ dimensional subspaces in $\mr^n$. One can define a
cellular decomposition of $G_{m,n}^+(\mr)$ by specifying, for each element of
$G_{m,n}^+(\mr)$, which $m \times m$ minors are zero and which are strictly
positive. The corresponding cells are called the {\em totally nonnegative
cells} of $G_{m,n}^+(\mr)$. In \cite{post}, Postnikov shows that totally
nonnegative cells in $G_{m,n}^+(\mr)$ are in bijection with the Cauchon
diagrams on partitions fitting into the partition $(n-m)^m$. For further
details, see Sections 3 and 6 in \cite{post}.\\

Hence, by the results in Section 4 above, the set of totally nonnegative cells
of $G_{m,n}^+(\mr)$ is in one-to-one correspondance with the set
of $\ch$-prime ideals of $\oqgmn$ distinct from the augmentation ideal. We
believe it would be interesting to understand this coincidence and we intend
to pursue this theme in a subsequent paper.


\vskip 1cm
\newpage 

\noindent 
S Launois:\\ 
Institute of Mathematics, Statistics and Actuarial Science,\\
University of Kent at Canterbury,\\ CT2 7NF,\\ UK\\~\\
E-mail: S.Launois@kent.ac.uk \\

~\\
T H Lenagan: \\
Maxwell Institute for Mathematical Sciences\\
School of Mathematics, University of Edinburgh,\\
James Clerk Maxwell Building, King's Buildings, Mayfield Road,\\
Edinburgh EH9 3JZ, Scotland, UK\\~\\
E-mail: tom@maths.ed.ac.uk \\

~\\
L Rigal: \\
Universit\'e Jean-Monnet (Saint-\'Etienne), \\
Facult\'e des Sciences et Techniques, \\
D\'e\-par\-te\-ment de Math\'ematiques,\\
23 rue du Docteur Paul Michelon,\\
42023 Saint-\'Etienne C\'edex 2,\\France\\~\\
E-mail: Laurent.Rigal@univ-st-etienne.fr\\


\end{document}